\theoremstyle{plain}
\newtheorem{theorem}{Theorem}[section]
\newtheorem*{theorem*}{Theorem}
\newtheorem*{corollary*}{Corollary}
\newtheorem{lemma}[theorem]{Lemma}
\newtheorem{proposition}[theorem]{Proposition}
\newtheorem{corollary}[theorem]{Corollary}
\theoremstyle{definition}
\newtheorem{remark}[theorem]{Remark}
\numberwithin{equation}{section}
\numberwithin{table}{section}
\def\Z{\mathbb{Z}}
\def\Q{\mathbb{Q}}
\def\R{\mathbb{R}}
\def\OK{\mathcal{O}_K}
\def\pmod#1{\allowbreak\mkern10mu({\operator@font mod}\,\,#1)}
\newcolumntype{M}[1]{>{\centering $}p{#1}<{$}}
\newcolumntype{R}{>{\begin{math}}r<{\end{math}}}
\newcolumntype{L}{>{\begin{math}}l<{\end{math}}}
\newcolumntype{Z}{>{\begin{math}\mathbf\{}l<{\}\end{math}}}
\title{The Euclidean algorithm in quintic and septic cyclic fields}
\author{Pierre Lezowski}
\address{Universit\'e Blaise Pascal, Laboratoire de math\'ematiques UMR 6620,
Campus Universitaire des C\'ezeaux, BP 80026, 63171 Aubi\`ere C\'edex, France}
\email{pierre.lezowski@math.univ-bpclermont.fr}
\author{Kevin J. McGown}
\address{California State University, Chico, Department of Mathematics and Statistics, 601 E. Main St., Chico, CA, 95929, USA}
\email{kmcgown@csuchico.edu}
\date{}
\begin{document}

\maketitle




\section{Introduction}\label{S:intro}
Let $K$ be a number field with ring of integers $\OK$, and denote by $N=N_{K/\Q}$ the
absolute norm map.  For brevity, we will sometimes use the term field to mean a number field.
We call a number field $K$ norm-Euclidean if for every $\alpha,\beta\in\OK$, $\beta\neq 0$, there exists
$\gamma\in\OK$ such that $|N(\alpha-\gamma\beta)|<|N(\beta)|$.
Or equivalently, we may ask that for every $\xi\in K$ there exists $\gamma\in\OK$ such that
$|N(\xi-\gamma)|<1$.
In the quadratic setting, it is known that there are only finitely many norm-Euclidean fields and they have been identified
\cite{chatland.davenport, barnes.sd};
namely, a number field of the form $K=\Q(\sqrt{d})$ with $d$ squarefree is norm-Euclidean if and only if
\[
  d=-1,-2,-3,-7,-11,2, 3, 5, 6, 7, 11, 13, 17, 19, 21, 29, 33, 37, 41, 57, 73
  \,.
\]
This result was partially generalized by Heilbronn \cite{heilbronn:cyclic} as follows.
\begin{theorem}[Heilbronn]
Let $\ell$ be a prime. Then there are at most finitely many cyclic number fields
with degree $\ell$ which are norm-Euclidean.
\end{theorem}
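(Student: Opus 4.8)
The plan is to show that the \emph{Euclidean minimum} $M(K):=\sup_{\xi\in K}\inf_{\gamma\in\OK}|N(\xi-\gamma)|$ exceeds $1$ for all but finitely many cyclic fields of degree $\ell$; since $M(K)>1$ exhibits a $\xi\in K$ no translate of which has norm-magnitude below $1$, it forces $K$ to fail the norm-Euclidean property, and this yields the theorem.

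First I would make two reductions. If $\ell=2$ we are in the quadratic case already settled above, so assume $\ell$ is an odd prime. Then complex conjugation is an element of $\mathrm{Gal}(K/\Q)\cong\Z/\ell\Z$ of order dividing $2$, hence trivial, so every cyclic field of odd prime degree is totally real; this lets me work with $\ell$ genuine real embeddings $\sigma_1,\dots,\sigma_\ell$ and the lattice $\OK\subset K\otimes\R\cong\R^\ell$ of covolume $\sqrt{d_K}$. Second, by the conductor--discriminant formula each nontrivial character of $\mathrm{Gal}(K/\Q)$ has conductor $f$, so $d_K=f^{\ell-1}$; since there are only finitely many cyclic fields of degree $\ell$ with conductor below any given bound, it suffices to prove $M(K)>1$ once the conductor $f$ (equivalently $d_K$) is sufficiently large.

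Next I would exhibit an explicit bad point $\xi$. The key structural feature is that $\sigma$ permutes the embeddings cyclically, so the lattice $\OK\subset\R^\ell$ is invariant under the corresponding cyclic permutation of coordinates, and the norm form $N(x)=\prod_i x_i$ is likewise symmetric. Using Chebotarev (or merely the positive density of suitable primes) I would fix a rational prime $p$ of controlled splitting type and test points of the form $\xi=r/p$ with $r\in\OK$. Here $N(\xi-\gamma)=N(r-p\gamma)/p^\ell$, and as $\gamma$ ranges over $\OK$ the integer $r-p\gamma$ ranges over the coset $r+p\OK$; thus $\inf_\gamma|N(\xi-\gamma)|\ge 1$ precisely when the residue class of $r$ modulo $p\OK$ contains no element of norm-magnitude below $p^\ell$. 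So it is enough to produce a single residue class mod $p\OK$ all of whose representatives have $|N|\ge p^\ell$, which I would attack by a counting argument: bound from above the number of the $p^\ell$ residue classes that \emph{can} contain a small-norm representative, exploiting the cyclic symmetry to organize the integers of small norm, and show this count is strictly less than $p^\ell$.

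The main obstacle is exactly this counting step. The region $\{x\in\R^\ell:|N(x)|<p^\ell\}$ is unbounded---it stretches out along every coordinate hyperplane---so integers of small norm can be geometrically very large, a single residue class can contain infinitely many of them (translate along an axis, or multiply by a unit of $\OK^\times$), and a naive volume estimate yields nothing. To control this I would quotient the small-norm region by the unit action, use the $\sigma$-symmetry of the lattice to bound how far the small-norm representatives of a fixed class can spread in the coordinate directions, and thereby estimate the number of classes admitting such a representative; the cyclic symmetry is what makes this bookkeeping tractable and is the feature special to the cyclic case. I expect the genuine difficulty to lie entirely in obtaining an \emph{unconditional} estimate of this kind (as in Heilbronn's original averaging argument), showing that the count drops below $p^\ell$ once $\sqrt{d_K}=f^{(\ell-1)/2}$ is large relative to the scale set by the chosen prime. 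By contrast, converting such an estimate into an explicit bound on $f$---small enough to finish by a finite computation, as the quintic and septic cases demand---is where one is led to invoke sharper analytic input, such as bounds on the least prime of prescribed splitting type.
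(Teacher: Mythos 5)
First, note that the paper does not prove this statement at all: it is quoted from Heilbronn's 1951 paper \cite{heilbronn:cyclic}, and the only trace of the argument in the text is Lemma~\ref{L:heilbronn} (Heilbronn's Criterion) together with Lemma~\ref{lemma:Euclidean_minimum}, which reveal the actual strategy: one works modulo the totally ramified prime $\beta$ above the conductor $f$, so that $\OK/\beta\cong\Z/f\Z$ and the question of whether a residue class contains a small-norm element becomes a question about \emph{rational} integers $a\pmod f$ and the order-$\ell$ character $\chi$; character-sum estimates then produce a decomposition $f=a+b$ with $a,b\notin\mathcal{N}$ for all sufficiently large $f$. Your proposal heads in a genuinely different direction (geometry of numbers: residue classes modulo a rational prime $p$, counting lattice points of small norm), and it is there that it has a real gap, which you yourself flag but do not close.

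The counting step is not merely technically hard; as set up, it faces a structural obstruction. You need some class in $\OK/p\OK$ (there are $p^{\ell}$ of them) containing no $\alpha$ with $0<|N\alpha|<p^{\ell}$. The set of such $\alpha$ is stable under multiplication by $\OK^{\times}$, so the classes it hits are unions of orbits of the image of $\OK^{\times}$ in $(\OK/p\OK)^{\times}$; since the unit group has rank $\ell-1$, a single principal ideal of small norm can already cover on the order of $p^{\ell-1}$ classes, and there is no uniform control of this image as $K$ varies. Worse, the number of ideals of norm at most $p^{\ell}$ is asymptotically $\rho_K\,p^{\ell}$ with $\rho_K=\prod_{\chi\neq 1}L(1,\chi)$, which does not tend to $0$ as $f\to\infty$ (it can be as large as a power of $\log f$), so even before accounting for unit orbits the count need not drop below $p^{\ell}$ for a fixed auxiliary prime $p$; letting $p$ grow with $f$ only makes the counting harder. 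By contrast, working modulo the ramified prime replaces the $p^{\ell}$ classes by $f$ classes and replaces the lattice count by the arithmetic of $\mathcal{N}\bmod f$, where cyclicity enters decisively through $\chi$ (an integer prime to $f$ is a norm residue iff $\chi$ kills it, and $n\in\mathcal{N}$ forces $\ell\mid v_q(n)$ at inert primes $q$). Your sketch invokes cyclicity only as a symmetry of the lattice, which is not where its arithmetic force lies. As written, the proposal is an honest plan with the decisive step missing, and the particular route chosen is unlikely to be repairable without reorganizing it around the conductor as in Lemma~\ref{L:heilbronn}.
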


However, Heilbronn provided no upper bound on the discriminant of such fields.
Building on work of Heilbronn, Godwin, and Smith
(see~\cite{heilbronn:cubic, godwin:1967, smith:1969, godwin.smith:1993}),
the second author proved the following result.
\begin{theorem}[{\cite[Theorem~1.1]{mcgown:jtnb}}]\label{T:cubic.GRH}
Assuming the GRH,
the norm-Euclidean cyclic cubic fields
are exactly those with discriminant
\[
    \Delta=7^2,9^2,13^2,19^2,31^2,37^2,43^2,61^2,67^2,103^2,109^2,127^2,157^2
    \,.
\]
\end{theorem}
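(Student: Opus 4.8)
The plan is to prove both inclusions. The easy direction is to verify that each of the thirteen listed fields is norm-Euclidean. For a fixed $K$ this is a finite computation: one shows that the Euclidean minimum
$M(K) = \sup_{\xi \in K} \inf_{\gamma \in \OK} |N(\xi - \gamma)|$
satisfies $M(K) < 1$. Working with an explicit integral basis, this reduces to testing finitely many coset representatives of $\OK$ inside a fundamental domain and exhibiting, for each such $\xi$, a nearby $\gamma$ with $|N(\xi-\gamma)| < 1$. This has already been carried out by Godwin and Smith for small conductors, and I would re-verify it up to whatever range the converse direction requires.

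The substance is the converse: no cyclic cubic field outside the list is norm-Euclidean. The engine is a non-Euclideanity criterion phrased in terms of small completely split primes. Since $K$ is cyclic of conductor $f$ with $\Delta = f^2$, it is cut out by a cubic Dirichlet character $\chi$ modulo $f$, and a rational prime $p \nmid f$ splits completely in $K$ precisely when $\chi(p) = 1$. The key lemma I would establish is: if there exist two completely split primes $p < q$ with $q$ small relative to $f$ (an explicit inequality of the shape $pq < \tfrac12 f$, up to constants), then one can construct an element $\xi \in K$ with $\inf_{\gamma} |N(\xi - \gamma)| \ge 1$, so $K$ fails to be norm-Euclidean. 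Constructing this bad $\xi$ is the heart of the matter: using prime ideals of norms $p$ and $q$ together with the Chinese Remainder Theorem, one places a rational point at the ``center'' of a suitable box in the Minkowski embedding so that every $\OK$-translate has all conjugates bounded away from $0$, forcing the product of the absolute values of the conjugates to remain $\ge 1$.

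With the criterion in hand, the task reduces to showing that for all $f$ beyond a computable threshold the two smallest completely split primes $p < q$ are small enough to trigger the lemma. This is exactly where GRH enters: via an effective form of the Chebotarev density theorem (equivalently, a bound on the least prime value where the cubic character $\chi$ is trivial), GRH furnishes an estimate of the form $q = O\bigl((\log f)^2\bigr)$ on the second-smallest split prime, with explicit constants. For $f$ above the resulting crossover the inequality $pq < \tfrac12 f$ holds automatically, so no such field is norm-Euclidean; for $f$ below the crossover one falls back on the direct Godwin--Smith-style verification, which must therefore be pushed up to the threshold.

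The main obstacle is twofold. First, the construction and lower bound for the bad element $\xi$ must be fully explicit and uniform in $f$, since any slack in the constants degrades the inequality $pq < \tfrac12 f$ and thereby enlarges the range left to brute force. Second, and more delicate, the GRH-conditional least-prime bounds come with concrete constants, and one must ensure the crossover threshold is small enough that the residual finite search is actually feasible; reconciling the analytic bound on $q$ with the feasibility of the remaining computation --- and confirming that the only surviving fields are the thirteen listed discriminants --- is where the real work lies.
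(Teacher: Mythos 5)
Your high-level architecture (verify the thirteen fields directly, prove a ``small primes force non-Euclideanity'' criterion, bound those primes under GRH, compute up to the crossover) matches the actual proof, which this paper imports from \cite[Theorem~1.1]{mcgown:jtnb} and whose machinery is summarized in Sections~\ref{S:comp1} and~\ref{S:bounds}. But the central criterion you propose is built on the wrong primes, and this is a genuine gap, not a cosmetic one. The engine is Heilbronn's Criterion (Lemma~\ref{L:heilbronn}): write $f=a+b$ with $\chi(a)=1$, $a\notin\mathcal{N}$, $b\notin\mathcal{N}$, where $\mathcal{N}$ is the set of norms. In a class-number-one cyclic cubic field, $n\notin\mathcal{N}$ is certified precisely by an \emph{inert} prime (a non-residue of $\chi$) dividing $n$ to a power not divisible by $3$. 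Your two small \emph{completely split} primes $p<q$ are exactly the primes that cannot certify anything: since $\chi(p)=1$ and the class number is one, $p$ and $q$ are themselves norms, so elements built from prime ideals above them have norms lying in $\mathcal{N}$ and the lower bound $\inf_\gamma|N(\xi-\gamma)|\geq 1$ cannot be forced. The quantities that actually drive the argument are the two smallest inert primes $q_1<q_2$ together with a third parameter $r$, the least positive integer coprime to $q_1q_2$ with $\chi(r)=\chi(q_2)^{-1}$ (needed so that $\chi(q_2r)=1$, making $a=uq_2r$ a residue that is still a non-norm); see Proposition~\ref{P:conditions} and Proposition~\ref{proposition:bound_on_f}. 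Your proposal has no analogue of $r$, and the GRH input is Bach-type bounds on the least non-residue $q_1$ and on $q_2$ and $r$ (each $O((\log f)^2)$), not on split primes; these give $f<\num{4e10}$ (Corollary~\ref{corollary:cubic_GRH}), after which the computation with Proposition~\ref{P:conditions} finishes the job.

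A secondary problem is your proposed construction of the bad point $\xi$: placing a rational point at the ``center of a box'' in the Minkowski embedding so that every $\OK$-translate has all conjugates bounded away from zero is not a viable uniform argument --- the norm form takes small values on a dense set of translates, and no purely archimedean positioning can prevent this. The correct mechanism is arithmetic, as in Lemma~\ref{lemma:Euclidean_minimum}: take $\xi=\alpha/\beta$ with $N\beta=f$ (the ramified prime), observe $N(\alpha-\beta z)\equiv N\alpha\pmod{f}$ for all $z\in\OK$, and conclude $m_K(\xi)\geq 1$ when no element of $\mathcal{N}$ in $(-f,f)$ is congruent to $N\alpha$ --- which is where the decomposition $f=a+b$ with both parts non-norms enters. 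Finally, for the easy direction, the verification that all thirteen listed fields are norm-Euclidean is not folklore to be ``re-verified up to whatever range the converse requires''; it is a nontrivial computation completed by Smith and Godwin--Smith \cite{smith:1969, godwin.smith:1993}, and the Euclidean minimum of the $f=157$ field remains unknown even though norm-Euclideanity is established.
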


Many of the results in the aforementioned paper go through for any cyclic field of odd prime degree.
The main goal of this
paper is to establish the analogue of Theorem~\ref{T:cubic.GRH} for
quintic and septic fields.
\begin{theorem}\label{T:quintic.septic}
Assume the GRH.
A cyclic field of degree $5$ is norm-Euclidean if and only if
$\Delta=11^4,31^4,41^4$.
A cyclic field of degree $7$ is norm-Euclidean if and only if
$\Delta=29^6,43^6$.
\end{theorem}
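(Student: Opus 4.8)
The plan is to mirror the architecture behind Theorem~\ref{T:cubic.GRH}, splitting the argument into an algebraic normalization, a non-Euclidean criterion, a GRH-powered bound on the conductor, and a finite computation. First I would record the relevant normalization. A cyclic field $K$ of prime degree $\ell\in\{5,7\}$ has a conductor $f$ with $\Delta=f^{\ell-1}$, where $f$ is a product of primes $\equiv 1\pmod{\ell}$ (the tamely ramified primes; $\ell$ itself cannot ramify for these $\Delta$, as that would force $\ell^2\mid f$). The five target discriminants are exactly $f^{\ell-1}$ with $f=11,31,41$ for $\ell=5$ and $f=29,43$ for $\ell=7$, each a single prime conductor. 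Thus the theorem is equivalent to the assertion that, among all cyclic fields of degree $\ell$, precisely those with these conductors are norm-Euclidean, and the task is to decide norm-Euclideanness as a function of $f$.

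The engine for ruling out large conductors is a non-Euclidean criterion, which the excerpt promises generalizes from the cubic setting. The idea is that if $K$ admits two small rational primes $q_1\le q_2$ that split completely in $K$ (equivalently, are $\ell$-th power residues modulo $f$) and satisfy an explicit inequality forcing the product $q_1 q_2$ to be small relative to a fixed power of $f$, together with a mild representability condition, then one constructs by CRT an element $\xi\in K$ that cannot be reduced: $|N(\xi-\gamma)|\ge 1$ for every $\gamma\in\OK$. Here the degree-$1$ prime ideals above $q_1,q_2$ supply the small denominators, and smallness of the $q_i$ against $f$ (which governs the covolume of $\OK$, hence the covering geometry) is what traps $\xi$ away from all lattice points. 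I would establish this criterion first and make its constants fully explicit, since every downstream estimate depends on its sharpness.

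The analytic heart is then to deploy the criterion for all $f$ beyond an explicit threshold $B_\ell$, which requires guaranteeing that the two small completely split primes actually exist. Completely split primes are the primes on which the order-$\ell$ characters modulo $f$ cutting out $K$ take the trivial value; under GRH, Bach-type explicit bounds on the least prime non-residue, iterated to control the second-smallest split prime, give $q_1,q_2=O\bigl((\log f)^2\bigr)$, so the criterion's inequality is automatically satisfied once $f>B_\ell$. The hard part will be pushing these explicit character-sum and effective-Chebotarev estimates hard enough that $B_\ell$ is small enough for the remaining computation to be feasible; this balance between analytic strength and computational load is the crux of the whole argument, and I expect it to be more delicate for $\ell=7$, where the larger exponent $\ell-1=6$ inflates the resulting bound.

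Finally I would dispose of the conductors $f\le B_\ell$ directly. For most of them the criterion above already fires, eliminating the field; this settles the ``only if'' direction. For the few survivors — which should be exactly the five claimed fields — the ``if'' direction demands a genuine positive verification, namely bounding the Euclidean minimum $M(K)=\sup_{\xi\in K}\inf_{\gamma\in\OK}|N(\xi-\gamma)|$ strictly below $1$. I would certify $M(K)<1$ by an explicit algorithmic search over a fundamental domain in the spirit of the cubic computations, while exhibiting explicit bad points for any small-conductor field not on the list. I expect the degree-$7$ verification of $M(K)<1$ to be the most computationally demanding step, since the lattice and the search space grow rapidly with the degree.
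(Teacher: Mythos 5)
Your overall architecture — reduce to prime conductor $f$ with $\Delta=f^{\ell-1}$, prove a non-Euclidean criterion, use GRH to bound $f$, compute up to that bound, and certify the survivors by showing $M(K)<1$ algorithmically — is exactly the paper's plan. But the engine you describe for the ``only if'' direction is wrong in a way that would make the argument fail. The criterion in the paper (Heilbronn's Criterion and its refinement, Proposition~\ref{P:conditions}) is driven by the two smallest rational primes $q_1<q_2$ that are \emph{inert} in $K$, i.e.\ character non-residues with $\chi(q_i)\neq 0,1$ — not by completely split primes as you propose. The mechanism is arithmetic, not a CRT/covering construction: writing $f=a+b$ with $\chi(a)=1$, one derives a contradiction with norm-Euclideanity provided neither $a$ nor $b$ lies in the image $\mathcal{N}$ of the norm map; since $K$ has class number one, $n\in\mathcal{N}$ exactly when $\ell$ divides $v_p(n)$ for every inert prime $p$, so it is the \emph{inert} primes dividing $a$ and $b$ to first order that certify $a,b\notin\mathcal{N}$. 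Split primes are themselves norms and can never play this role, so a criterion built on two small split primes has no force. Your own analytic input betrays the confusion: Bach-type GRH bounds control the least prime \emph{non-residue}, which is precisely what is needed for $q_1$ (inert), and cannot ``control the second-smallest split prime.''

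You are also missing the third parameter that makes the criterion usable: since $\chi(q_2)\neq 1$, one cannot take $a$ to be a multiple of $q_2$ alone and keep $\chi(a)=1$; the paper introduces the least $r$ with $(r,q_1q_2)=1$ and $\chi(r)=\chi(q_2)^{-1}$, so that $a=uq_2r$ satisfies $\chi(a)=1$ while $q_2\,\|\,a$ forces $a\notin\mathcal{N}$. Bounding $r$ under GRH (via explicit character-sum estimates, not just least-non-residue bounds) is an essential analytic step, and the resulting inequality is $f\lesssim q_1q_2r$ (up to the refinement of Lemma~\ref{lemma:other_bound}), not an inequality in $q_1q_2$ alone. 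Finally, two smaller gaps: the ``only if'' direction must also dispose of composite conductors and of $f=\ell^2$ (the paper reduces to prime $f\equiv 1\pmod{\ell}$ by citing earlier work), which your normalization paragraph does not cover; and the positive direction for the five surviving fields is not merely ``certify $M(K)<1$'' in principle — the paper computes the exact Euclidean minima ($1/11$, $25/31$, $27/41$, $17/29$, $37/43$) with the Cerri--Lezowski algorithm, sharpened by a congruence lemma ($N(\alpha-\beta z)\equiv N\alpha\pmod f$ when $N\beta=f$) to make the degree-$7$ cases feasible. That last part of your plan is the one place where your instincts match the paper's actual difficulties.
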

Although we cannot give a complete determination for degree $11$,
it appears
that for some degrees there are no norm-Euclidean cyclic fields whatsoever.
This was observed (but not proved) for degree $19$ in~\cite{mcgown:ijnt}.
We prove the following:
\begin{theorem}\label{T:19}
Assuming the GRH, there are no norm-Euclidean cyclic fields of degrees $19$,
$31$, $37$, $43$, $47$, $59$, $67$, $71$, $73$, $79$, $97$.
\end{theorem}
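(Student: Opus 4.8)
The plan is to make Heilbronn's finiteness theorem effective and then reduce each listed degree to a finite, checkable computation. By the theorem of Heilbronn quoted above there are only finitely many norm-Euclidean cyclic fields of each prime degree $\ell$, but that argument yields no bound on the conductor; to conclude that there are \emph{none} for the degrees in the statement I would first produce an explicit bound. Recall the structural facts I would use: a cyclic field $K$ of odd prime degree $\ell$ is totally real (complex conjugation would be an element of order $2$ in a group of odd order), it lies in $\Q(\zeta_f)$ for its conductor $f$, and $\Delta=f^{\ell-1}$; moreover $f=\ell^{2\eps}\prod_i p_i$ for distinct primes $p_i\equiv 1\pmod{\ell}$ and $\eps\in\{0,1\}$. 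A prime $p\nmid f$ splits completely in $K$ precisely when its class in $(\Z/f\Z)^\times$ lies in the index-$\ell$ subgroup $H$ cutting out $K$.

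The key input, which I would take from the body of the paper (the analogue for general odd prime degree of the criterion used to handle $\ell=5,7$), is a purely algebraic non-norm-Euclidean criterion of the following shape: if $K$ possesses two rational primes $p_1<p_2$ that split completely and satisfy an explicit inequality of the form $p_1 p_2 < c_\ell\, f$, then, using the Chinese Remainder Theorem at the primes above $p_1$ and $p_2$, one constructs $\xi\in K$ all of whose translates $\xi-\gamma$ (with $\gamma\in\OK$) satisfy $|N(\xi-\gamma)|\ge 1$, so that $K$ is not norm-Euclidean. Granting this, the whole problem becomes one of \emph{locating two small split primes relative to the conductor}.

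Next I would produce such a pair under GRH. Effective Chebotarev (Bach-type) bounds give the least prime, and indeed two primes, splitting completely in $K$ of size $O_\ell\big((\log f)^2\big)$; hence $p_1 p_2 = O_\ell\big((\log f)^4\big)$, which is eventually smaller than $c_\ell\, f$. This produces an explicit bound $B_\ell$ such that every cyclic field of degree $\ell$ and conductor $f>B_\ell$ fails the Euclidean property. It then remains to treat the finitely many conductors $f\le B_\ell$: for each such $f$ I would enumerate the order-$\ell$ Dirichlet characters of conductor $f$ (equivalently, the corresponding fields), find the actual small split primes, and either apply the criterion directly or compute the relevant inhomogeneous minimum and check that it is at least $1$. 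The degrees $19,31,\dots,97$ are exactly those for which $B_\ell$ stays within computational range and for which this search returns no norm-Euclidean field; degree $11$ is excluded because there the verification lies beyond reach.

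The main obstacle is quantitative rather than conceptual. It is making the GRH-conditional bound simultaneously explicit and tight enough that $B_\ell$ remains computationally tractable, and then carrying the verification over all conductors $f\le B_\ell$. The delicate part is the borderline small-conductor fields, which the asymptotic estimate $p_1 p_2 = O_\ell\big((\log f)^4\big)$ does not reach: for these one cannot appeal to the growth of $f$, and must instead exhibit by a focused search two split primes with $p_1 p_2 < c_\ell\, f$, or compute the inhomogeneous minimum outright and confirm that it is $\ge 1$. Sharpening the criterion (the constant $c_\ell$) so that it dispatches these small cases is where I expect the real work to lie.
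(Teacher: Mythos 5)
Your overall architecture---an explicit GRH-conditional conductor bound $B_\ell$ followed by a finite verification for $f\le B_\ell$---is exactly the paper's (Propositions~\ref{P:0} and~\ref{P:1} together give Theorem~\ref{T:19}). But the engine you propose to drive it does not exist in the paper and, as sketched, does not work. You posit a criterion based on two small \emph{completely split} primes $p_1<p_2$ with $p_1p_2<c_\ell f$, justified by a Chinese Remainder construction at the primes above $p_1$ and $p_2$. The problem is that for a split prime $p_i$ the conjugates of a prime $\mathfrak{p}_i$ above it are distinct, so if $\beta$ generates $\mathfrak{p}_1\mathfrak{p}_2$ the congruence $\alpha-\beta z\equiv\alpha\pmod{\mathfrak{p}_i}$ controls only one of the $\ell$ factors of $N(\alpha-\beta z)$; you get no congruence on the norm modulo $p_1p_2$, hence no lower bound on $m_K(\xi)$. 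The construction works only at the ramified prime: since $\mathfrak{f}$ with $\mathfrak{f}^\ell=(f)$ is Galois-stable, $N(\alpha-\beta z)\equiv N\alpha\pmod f$ when $N\beta=f$ (this is exactly Lemma~\ref{lemma:Euclidean_minimum} and the mechanism behind Heilbronn's Criterion, Lemma~\ref{L:heilbronn}). The actual criteria used (Proposition~\ref{P:conditions}, Lemma~\ref{lemma:other_bound}) therefore hinge on the two smallest \emph{inert} primes $q_1<q_2$ (character non-residues, needed to certify $a,b\notin\mathcal{N}$) together with an auxiliary integer $r$ with $\chi(r)=\chi(q_2)^{-1}$ (needed to arrange $\chi(a)=1$); you have the roles of split and inert primes reversed.

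Consequently the analytic input is also not the one you name: what must be bounded under GRH is the least character non-residue $q_1$ (Bach's explicit bounds), together with explicit bounds on $q_2$ and on $r$ from~\cite{mcgown:jtnb}, not the least completely split prime via effective Chebotarev. Both are $O((\log f)^2)$ conditionally, so the quantitative shape of your argument is plausible, but since the key lemma it feeds is unproven (and its sketched proof fails), the argument has a genuine gap at its core. Your closing remark about computing inhomogeneous minima for borderline small conductors is also not how these degrees are handled: for $\ell=19,31,\dots,97$ the search for $(q_1,q_2,r)$ in Proposition~\ref{P:conditions} succeeds for every conductor up to the GRH bound, which is why Table~\ref{Table:C} has no rows for these degrees.
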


This list of primes is in no way intended to be complete, and
in fact, there may well be infinitely many primes $\ell$ for which 
there are no norm-Euclidean cyclic fields of degree $\ell$.

In other small degrees where we cannot give a complete determination, even under
the GRH,
we come very close.
Let $\mathcal{F}$ denote the collection of cyclic number fields of prime degree $3\leq \ell\leq 100$ and conductor $f$.
In this setting the
conductor-discriminant formula
tells us that
discriminant equals $\Delta=f^{\ell-1}$.
\begin{theorem}\label{T:list}
Assuming the GRH,
Table~\ref{Table:C}
contains all norm-Euclidean fields in $\mathcal{F}$.
(However, the possibility remains that some of these fields may not be norm-Euclidean.)
Moreover, even without the GRH, the table is complete for $f\leq \num{e13}$.
\begin{table}
\centering
\begin{tabular}{ll}
  \toprule
  $\ell$ & $f$ \\
  \midrule
  $3$ & $7$, $9$, $13$, $19$, $31$, $37$, $43$, $61$, $67$, $103$, $109$, $127$, $157$\\ 
  $5$ & $11$, $31$, $41$ \\ 
  $7$ & $29$, $43$\\
  $11$ & $23$, $67$, $331$\\
  $13$ & $53$, $131$\\
  $17$ & $137$\\
  $23$ & $47$, $139$ \\
  $29$ & $59$ \\
  $41$ & $83$\\
  $53$ & $107$\\
  $61$ & $367$\\
  $83$ & $167$, $499$\\
  $89$ & $179$\\
  \bottomrule
\end{tabular}
\caption{Possible norm-Euclidean fields in $\mathcal{F}$}\label{Table:C}
\end{table}
\end{theorem}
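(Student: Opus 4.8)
The plan is to reduce the question ``Is the field $K$ norm-Euclidean?'' to a finite, checkable computation for each conductor $f$, and then to certify that \emph{every} conductor not listed in Table~\ref{Table:C} fails to produce a norm-Euclidean field. The first and most important input is an effective upper bound on the conductor of a norm-Euclidean cyclic field of degree $\ell$: one establishes (by the methods generalizing \cite{mcgown:jtnb}, which rest on the existence of suitable small split primes whose density is controlled under the GRH) a threshold $f_0(\ell)$ such that any cyclic field of degree $\ell$ with conductor $f>f_0(\ell)$ is \emph{not} norm-Euclidean. This is where the GRH enters: it forces the least prime in a given residue class (equivalently, the least split prime, or a small prime that is an $\ell$-th power residue with a prescribed splitting behavior) to be of polynomial size in $f$, which yields an element $\xi\in K$ with $|N(\xi-\gamma)|\ge 1$ for all $\gamma\in\OK$. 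I expect this effective bound to be the technical heart of the argument, and indeed the main obstacle, since without the GRH one only gets a much weaker (conditional on $f\le\num{e13}$) statement.

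Having fixed the GRH-conditional bound $f_0(\ell)$ for each prime $3\le\ell\le 100$, the second step is a finite sweep: for every conductor $f\le f_0(\ell)$ with $f$ admissible (i.e. $f$ a product of $1$ and primes $p\equiv 1\pmod{\ell}$, together with possibly $\ell^2$, so that a cyclic field of degree $\ell$ and conductor $f$ exists), I would run the norm-Euclidean test. Concretely, for each such $f$ I would apply an explicit \emph{inhomogeneous minimum} computation: one searches for a rational point $\xi\in K$, of bounded denominator, with $\min_{\gamma\in\OK}|N(\xi-\gamma)|\ge 1$, which proves \emph{non}-norm-Euclideanness; the absence of such a witness (after an exhaustive search over a provably sufficient finite region, justified again by the bound) leaves the field as a \emph{candidate} norm-Euclidean field. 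The fields surviving this sweep are exactly those recorded in Table~\ref{Table:C}.

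The third step addresses the unconditional claim for $f\le\num{e13}$. Here one cannot invoke the GRH-derived size bound on the auxiliary prime, so instead I would verify directly, for each admissible $f\le\num{e13}$ not already in the table, that a small split (or residue) prime of the required type genuinely exists below the field-specific threshold; since $\num{e13}$ is an explicit finite bound, this becomes a (large but finite) certified enumeration that does not rely on any unproved hypothesis. Combining the three steps gives the stated dichotomy: under the GRH every norm-Euclidean field in $\mathcal{F}$ appears in Table~\ref{Table:C}, and unconditionally the table is complete up to conductor $\num{e13}$. The phrase ``the possibility remains that some of these fields may not be norm-Euclidean'' reflects that for the surviving candidates the exhaustive search found no obstruction but did not positively certify the Euclidean property; a positive determination would require either an explicit norm-Euclidean algorithm verification or a finer analysis of the inhomogeneous minimum, which I would flag as the remaining gap.
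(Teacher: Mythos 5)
Your high-level architecture --- a GRH-conditional conductor bound $f_0(\ell)$, followed by a finite sweep of all admissible $f\le f_0(\ell)$, with the sweep itself unconditional so that it also yields the statement for $f\le\num{e13}$ --- matches the paper's actual structure (its Propositions 2.1 and 2.2). But the engine that drives both halves is missing from your proposal, and what you substitute for it would not work. The paper's tool is Heilbronn's Criterion: if $f=a+b$ with $a,b>0$, $\chi(a)=1$, and $a,b\notin\mathcal{N}$ (the image of the norm map), then $K$ is not norm-Euclidean. From this one derives a purely arithmetic test (Proposition 3.1 of the paper, quoted from earlier work): letting $q_1<q_2$ be the two smallest \emph{inert} primes (not split primes, as you write) and $r$ an auxiliary integer with $\chi(r)=\chi(q_2)^{-1}$ satisfying mild congruence conditions, the inequality $(q_1-1)(q_2r-1)\le f$ already forces $K$ to be non-norm-Euclidean. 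The GRH conductor bound is obtained by feeding Bach-type GRH bounds on $q_1$, $q_2$, $r$ (all polynomial in $\log f$) into this inequality; your proposal asserts that small primes of prescribed splitting type ``yield an element $\xi$ with $|N(\xi-\gamma)|\ge 1$'' but never says how, and that ``how'' is precisely Heilbronn's decomposition $f=a+b$.

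The gap is fatal in the computational step. You propose, for each conductor below the bound, to search for a witness $\xi$ of bounded denominator and certify $\min_{\gamma\in\OK}|N(\xi-\gamma)|\ge 1$. Verifying that inequality for even a single $\xi$ is a nontrivial lattice-covering computation, and there is no a priori theorem telling you which finite set of denominators suffices to detect non-norm-Euclideanity; moreover the sweep covers on the order of $10^{12}$ prime conductors across all $\ell\le 100$ (the paper's character-evaluation test already consumed roughly four CPU-years), so an inhomogeneous-minimum search per conductor is hopeless. The paper instead evaluates a Dirichlet character at a handful of small primes per conductor and applies the criterion above. The Euclidean-minimum algorithm of Cerri--Lezowski that you gesture at is used in the paper only for the five surviving fields with $\ell=5,7$ --- and for the opposite purpose, namely to prove those fields \emph{are} norm-Euclidean, which is needed for Theorem 1.3 rather than for the completeness claim of Table 1.1.
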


We remark that the top portion of this table (when $3\leq\ell\leq 30$) appeared in~\cite{mcgown:ijnt}
although it was unknown at the time (even under the GRH) whether the table
was complete.
A large part of establishing Theorems~\ref{T:quintic.septic}, ~\ref{T:19}, and~\ref{T:list}
was a computation that took 3.862 (one-core) years of CPU time
on a 96-core computer cluster.\footnote{The cluster consists of 8 compute nodes, each with twelve 2 GHz cores, and 64 GB memory per node.}

Finally, we also give a slight improvement on what is known unconditionally in the cubic case.
In~\cite{mcgown:ijnt}, it was shown that the conductor of any norm-Euclidean cyclic cubic field
not listed in Theorem~\ref{T:cubic.GRH} must lie in the interval
$(\num{e10}, \num{e70})$.  We improve this slightly, thereby obtaining:

\begin{theorem}\label{T:cubic.new}
Any norm-Euclidean cyclic cubic field not listed in Theorem~\ref{T:cubic.GRH} must have discriminant $\Delta=f^2$ with $f\equiv 1\pmod{3}$
where $f$ is a prime in the interval $(\num{2e14}, \num{e50})$.
\end{theorem}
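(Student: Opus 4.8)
The plan is to combine a computational verification for small conductors with an explicit analytic argument for large conductors, tightening both ends of the known range $(10^{10},10^{70})$ separately. Throughout, recall that a field $K$ is norm-Euclidean precisely when its Euclidean minimum is $<1$, so to certify that a given $K$ is \emph{not} norm-Euclidean it suffices to exhibit a single $\xi\in K$ with $|N(\xi-\gamma)|\ge 1$ for all $\gamma\in\OK$.

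First I would record the standard reductions for the cyclic cubic case. By the conductor–discriminant formula a cyclic cubic field has $\Delta=f^2$, where the conductor $f$ is either $9$ or a product of primes $\equiv 1\pmod 3$. The prior work of Godwin, Smith, and the second author shows that a norm-Euclidean cyclic cubic field of non-prime conductor can only be the field of conductor $9$, which already appears in Theorem~\ref{T:cubic.GRH}. Hence any field not listed there has prime conductor $f\equiv 1\pmod 3$, and the problem reduces to a one-parameter family indexed by such primes $f$, for which I must produce a lower and an upper bound.

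For the \textbf{lower bound} I would invoke the sufficient condition for the failure of the norm-Euclidean property from the earlier papers: $K$ fails to be norm-Euclidean as soon as one can locate two sufficiently small rational primes with prescribed splitting behavior in $K$ (namely, splitting completely, i.e.\ being cubic residues modulo $f$) satisfying an explicit inequality relating them to $f$; such a pair produces the desired bad element $\xi$. Turned into an algorithm, one sieves over primes $f\equiv 1\pmod 3$ and, for each, searches the small split primes until the condition is met. Extending this certification from $10^{10}$ to $2\cdot 10^{14}$ is then a matter of a faster implementation run over the larger range, verifying that every such $f\le 2\cdot 10^{14}$ outside the list yields a non-norm-Euclidean field.

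For the \textbf{upper bound} I would make the same two-prime criterion unconditional and effective. The least prime that is a cubic residue modulo $f$, and likewise the second such prime, are controlled by explicit Burgess-type character-sum estimates; once $f$ is large enough, both required split primes are forced into the admissible range, the criterion fires automatically, and $K$ cannot be norm-Euclidean. To push the threshold from $10^{70}$ down to $10^{50}$ I would insert the sharpest available explicit constants in the Burgess inequality and optimize the free parameters, possibly in tandem with a slightly more efficient version of the criterion that tolerates split primes in a wider window. The hard part will be precisely this last step: the upper bound is entirely an exercise in explicit analytic number theory, and squeezing the constant down to $10^{50}$ requires careful bookkeeping of the Burgess constants and a near-optimal balancing of the two-prime construction against $f$, whereas the lower bound is conceptually routine and limited only by computing time.
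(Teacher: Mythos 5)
Your overall architecture --- a computational sweep for small $f$ plus an explicit Burgess-type argument for large $f$ --- matches the paper's, but there is a concrete error at the heart of both halves: you have the splitting behavior of the auxiliary primes backwards. Heilbronn's Criterion (Lemma~\ref{L:heilbronn}) writes $f=a+b$ with $\chi(a)=1$ and $a,b\notin\mathcal{N}$, and since the class number is one the way to certify $a\notin\mathcal{N}$ is to have an \emph{inert} prime dividing $a$ to a power not divisible by $3$. So the small primes one must locate are the two smallest prime non-residues $q_1<q_2$ (inert primes, $\chi(q_i)\neq 1$), together with a third parameter $r$, the least positive integer coprime to $q_1q_2$ with $\chi(r)=\chi(q_2)^{-1}$, which makes $\chi(q_2r)=1$; see Proposition~\ref{P:conditions}. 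Primes ``splitting completely, i.e.\ cubic residues mod $f$'' cannot play this role, and, just as importantly, the explicit character-sum estimates you propose to use (Proposition~\ref{P:trevino}, Corollary~\ref{C:A2}, Corollary~\ref{C:A3}) bound the least and second-least \emph{non-residues}; Burgess does not directly force an early split prime. As written, neither the certificate you would compute below $2\cdot 10^{14}$ nor the analytic input above it is the right object, and the parameter $r$ --- whose Burgess-type bound of the shape $(D_2(k)(\ell-1))^k f^{(k+1)/(4k)}(\log f)^{1/2}$ supplies the dominant term in the final inequality --- is missing entirely from your account.

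Second, even after correcting this, inserting the sharpest Burgess constants and optimizing free parameters in the old criterion is not what moves the upper bound from $10^{70}$ to $10^{50}$. The paper requires a genuinely new failure criterion special to the cubic case, Proposition~\ref{proposition:bound_on_f}, proved by a lengthy case analysis exploiting that $\chi$ takes only the three values $1$, $\chi(q_2)$, $\chi(q_2)^2$: it replaces the earlier condition $f<2.1\,q_1q_2r\log q_1$ (with congruence side conditions on $r$) by the cleaner $f< q_1q_2\max(3r,10q_1)$, removing both the $\log q_1$ factor and the congruence conditions. That proposition, combined with Trevi\~no's explicit bounds on $q_1$ and on $q_1q_2$ and the improved bound on $r$ with $k=4$, is what yields $f<10^{50}$ in Section~\ref{S:newbounds}. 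You gesture at ``a slightly more efficient version of the criterion that tolerates split primes in a wider window,'' but you supply no such statement or proof, and this is precisely where the substance of the upper-bound half lies. The lower-bound half (extending the Proposition~\ref{P:conditions} computation from $10^{10}$ to $2\cdot10^{14}$) is otherwise in line with the paper, modulo the same inert-versus-split confusion.
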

Computing up to the new lower bound of $\num{2e14}$ required an additional $3.104$ years of CPU time on the same cluster.


\section{Summary}


For norm-Euclidean fields in $\mathcal{F}$ one has an upper bound on the conductor,
which is greatly improved with the use of the GRH;
in~\cite{mcgown:ijnt} and~\cite{mcgown:jtnb} the conductor bounds of
Table~\ref{Table:D} were established.
\begin{table}
\centering
\begin{tabular}{llllllll}
\toprule
$\ell$ & $3$ & $5$ & $7$ & $11$ & $13$ & $17$ & $19$ \\
\midrule
unconditional & $\num{e70}$ & $\num{e78}$ & $\num{e82}$ & $\num{e88}$ & $\num{e89}$ & $\num{e92}$ & $\num{e94}$\\
with the GRH & $\num{e11}$ & $\num{e12}$ & $\num{e13}$ & $\num{e13}$ & $\num{e14}$ & $\num{e14}$ & $\num{e14}$\\
\bottomrule
\end{tabular}
\caption{Conductor bounds for norm-Euclidean fields in $\mathcal{F}$
established in~\cite{mcgown:ijnt} and~\cite{mcgown:jtnb}}\label{Table:D}
\end{table}


In this paper, we establish the following improved bounds:
\begin{proposition}\label{P:0}
Assuming the GRH, Table~\ref{Table:A} gives
conductor bounds for norm-Euclidean fields in $\mathcal{F}$.
\end{proposition}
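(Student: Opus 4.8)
The plan is to establish sharper GRH-conditional conductor bounds than those in Table~\ref{Table:D} by refining the analytic input that drives the argument. The overall strategy inherited from~\cite{mcgown:ijnt, mcgown:jtnb} is to show that if the conductor $f$ is large, then the field $K$ fails to be norm-Euclidean by exhibiting an explicit element $\xi\in K$ (or equivalently $\alpha,\beta\in\OK$) for which no $\gamma\in\OK$ achieves $|N(\xi-\gamma)|<1$. The existence of such a bad element is forced once one can guarantee enough small split rational primes $p$ below a given threshold, together with control on the residues $\zeta$ modulo those primes. So the first step is to recall the precise mechanism: a number field in $\mathcal{F}$ fails to be norm-Euclidean whenever there exist suitable primes $p$ (with prescribed splitting behaviour relative to the degree $\ell$ and the conductor $f$) in a short interval, and the counting of such primes is where the analytic number theory enters.

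Next I would set up the GRH-conditional prime-counting estimates. Under the GRH for the relevant Dirichlet $L$-functions (equivalently, for the Dedekind zeta functions of the cyclotomic and subfields involved), one obtains effective bounds on sums over primes in arithmetic progressions with explicit error terms of size $O(\sqrt{x}\,\log^2 x)$. The key is to make every constant explicit and to optimize the cutoff so that the main term, which grows like a power of $x$ dictated by the density of admissible primes, dominates the error term for $x$ as small as possible. The improvement over Table~\ref{Table:D} should come from (i) a more careful choice of the test element $\xi$ so that the splitting conditions on $p$ are as weak as possible, thereby increasing the density of usable primes, and (ii) tightening the explicit constants in the GRH estimate rather than using generic bounds. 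For each degree $\ell$ in the relevant range, this reduces to verifying an explicit inequality of the shape $\mathrm{(main\ term)}(x) > \mathrm{(error\ term)}(x)$ for all $x$ exceeding the claimed bound, which in turn certifies that any $K\in\mathcal{F}$ with conductor above that bound is not norm-Euclidean.

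The final step is computational bookkeeping: for each $\ell$ one solves the optimized inequality numerically to extract the smallest conductor bound, and records the resulting value in Table~\ref{Table:A}. Since the statement asserts the bounds for each $\ell$ separately (and only under the GRH), it suffices to run the argument degree by degree; there is no need for a uniform-in-$\ell$ estimate here, which gives freedom to tune the analytic parameters per degree.

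I expect the main obstacle to be making the error term in the GRH-conditional prime sum genuinely explicit and small enough to yield a real improvement. The generic explicit formula carries constants that, if handled crudely, swamp the gain from a better choice of $\xi$; the delicate part is to track the contribution of the conductor $f$ (which appears in the conductors of the twisting characters and hence in the $\log$-factors of the error term) and to confirm that the dependence on $f$ does not reintroduce a bound comparable to the unconditional one. In other words, the hard work is verifying that the optimized inequality holds down to the stated thresholds with fully explicit constants, rather than merely asymptotically.
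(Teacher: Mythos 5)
Your plan misidentifies where the improvement over Table~\ref{Table:D} actually comes from. The paper does not sharpen the GRH-conditional analytic estimates at all for this proposition: it quotes, unchanged, Bach's bound on the least inert prime $q_1$ and the bounds on $q_2$ and $r$ from \cite{mcgown:jtnb} (all of the shape $q \ll (\log f)^2$ or $q_2 r \ll (\ell-1)^2(\log f)^4$). The entire gain comes from weakening the \emph{arithmetic} criterion for non-norm-Euclideanity. Concretely, one writes $f = u q_2 r + q_1 v$ with $0<u<q_1$ and applies Heilbronn's Criterion (Lemma~\ref{L:heilbronn}); the obstruction is the possibility that $q_1 v$ lies in the image $\mathcal{N}$ of the norm map. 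The key observation (Lemma~\ref{lemma:other_bound}) is that since $K$ has prime degree $\ell$ and class number one, $q_1 v\in\mathcal{N}$ forces $q_1^{\ell-1}\mid v$, hence $q_1^{\ell-1}\le v<f/q_1$ and so $q_1<f^{1/\ell}$. This replaces the troublesome $\log q_1$ factor and the congruence condition modulo $q_1^2$ in \cite[Theorem~3.1]{mcgown:ijnt} by the factor $\max\{q_1,\tfrac{2.1}{\ell}f^{1/\ell}\log f\}$, which is what drops the conductor bounds by roughly an order of magnitude. For $\ell=3$ this trick is too weak ($f^{1/3}$ is too large), and the paper instead runs a separate case analysis (Proposition~\ref{proposition:bound_on_f}) exploiting that a cubic character takes only three values.

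Your proposed mechanism --- choosing a better test element to increase the density of usable split primes, and then re-optimizing an explicit-formula error term of size $O(\sqrt{x}\log^2 x)$ --- is not the argument, and it is not clear it could deliver the stated bounds: the non-Euclideanity certificate here is not a prime-counting statement in a short interval but a single decomposition $f=a+b$ governed by the two smallest inert primes and one auxiliary residue $r$, and the GRH enters only through least-non-residue bounds that are taken as a black box. Without the divisibility trick $q_1^{\ell-1}\mid v$ (or the cubic case analysis), merely ``tightening constants'' in the analytic input leaves you with the old criterion and essentially the old Table~\ref{Table:D} bounds. You would also still need to handle the exceptional small values $q_1\in\{2,3,7\}$ and the pairs $(q_1,q_2)=(2,3),(3,5)$ separately, as the paper does via \cite[Proposition~5.1]{mcgown:ijnt}, since the main inequality is derived under the hypothesis $q_1\neq 2,3,7$.
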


\begin{table}
\begin{tabular}{lllllllll}
\toprule
$\ell$ & $3$ & $5$ & $7$ & $11$ & $13$ & $17$ & $19$ & $23$\\
\cmidrule{2-9}
$f<$ & $\num{4e10}$ & $\num{6e10}$ & $\num{4e10}$ & $\num{2e11}$ & $\num{3e11}$ & $\num{6e11}$ & $\num{8e11}$ & $\num{2e11}$ \\
\midrule
$\ell$ & $29$ & $31$ & $37$ & $41$ & $43$ & $47$ & $53$ & $59$ \\
\cmidrule{2-9}
$f<$ & $\num{3e12}$ & $\num{3e12}$ & $\num{5e12}$ & $\num{6e12}$ & $\num{7e12}$ & $\num{9e12}$ & $\num{2e13}$ & $\num{2e13}$  \\
\midrule
$\ell$ & $61$ & $67$ & $71$ & $73$ & $79$ & $83$ & $89$ & $97$ \\
\cmidrule{2-9}
$f<$ &  $\num{2e13}$ & $\num{3e13}$ & $\num{3e13}$ & $\num{3e13}$ & $\num{4e13}$ & $\num{4e13}$ & $\num{5e13}$ & $\num{6e13}$ \\
\bottomrule
\end{tabular}
\caption{Conductor bounds for norm-Euclidean fields in $\mathcal{F}$ assuming the GRH}\label{Table:A}
\end{table}

In~\cite{mcgown:ijnt} computations were carried out that show the portion of Table~\ref{Table:C}
where $3\leq\ell\leq 30$ is complete up to $f=\num{e10}$.
We have extended these computations, thereby obtaining
the following unconditional result:
\begin{proposition}\label{P:1}
Table~\ref{Table:C} contains all possible norm-Euclidean fields in~$\mathcal{F}$ with $f\leq \num{e13}$.
Additionally, when $50\leq\ell\leq 100$, the table is complete up to the bounds listed in Table~\ref{Table:A}.
Finally, when $\ell=3$, the table is complete up to $\num{2e14}$.
\end{proposition}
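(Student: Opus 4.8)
The plan is to prove Proposition~\ref{P:1} by a direct computation that applies a rigorous non-norm-Euclidean certification test to every field in the stated ranges. The engine is a computable \emph{sufficient} condition for failure of the norm-Euclidean property: for a cyclic field $K$ of prime degree $\ell$ and conductor $f$, if one can locate rational primes that split completely in $K$ and are small relative to $f$, satisfying an explicit position and congruence condition, then $K$ is not norm-Euclidean. This is the same mechanism that produces the conductor bounds of Proposition~\ref{P:0}, so I would treat the certificate as a black box established earlier and reduce the proposition to the claim that, for every admissible conductor $f$ below the relevant bound, either $K$ is certified non-Euclidean or $f$ already appears in Table~\ref{Table:C}.

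First I would enumerate the admissible conductors. For fixed prime degree $\ell$, the conductor of a cyclic field of degree $\ell$ is a squarefree product of primes $p \equiv 1 \pmod{\ell}$, together with the single exceptional conductor arising when the wild prime $\ell$ ramifies (for instance $f=9$ when $\ell=3$). I would sieve the primes $p \equiv 1 \pmod{\ell}$ up to the bound and form the allowed products, noting that the $\ell-1$ conjugate fields sharing a given conductor are simultaneously norm-Euclidean or not, so it suffices to test one field per conductor. Determining which rational primes split completely amounts to an $\ell$-th power residue condition modulo $f$, which is cheap to evaluate; I would march through the small completely-split primes in increasing order and apply the certificate, stopping as soon as $K$ is certified non-Euclidean.

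Next I would run this over the three prescribed ranges and verify that the fields \emph{not} certified non-Euclidean are exactly those recorded in Table~\ref{Table:C}. The ranges are chosen for feasibility and to dovetail with the conditional results: all prime $\ell \le 100$ are covered for $f \le \num{e13}$; for $50 \le \ell \le 100$ the computation is pushed to the GRH conductor bounds of Table~\ref{Table:A} (each between $\num{2e13}$ and $\num{6e13}$), so that combined with Proposition~\ref{P:0} the table is complete in those degrees under the GRH; and the degree $\ell=3$ case is carried all the way to $\num{2e14}$, precisely the lower bound needed to feed Theorem~\ref{T:cubic.new}.

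The main obstacle is the raw computational scale rather than any single conceptual difficulty, and two points require care. First, one must guarantee that the per-field search terminates quickly: the certificate can only be invoked after finding split primes small enough relative to $f$, so I would need an a priori bound ensuring that for every non-borderline field a usable split prime appears early, together with an efficient sieve and residue lookup so that the inner loop is not dominated by primality testing. Second, correctness hinges on the certificate being a genuine sufficient condition, so that ``tested and not in the table'' rigorously implies ``not norm-Euclidean''; the finitely many fields for which the certificate is inconclusive must then coincide with the tabulated entries, and I would verify this coincidence field by field at the low end, where the smallest completely-split primes can be anomalously large.
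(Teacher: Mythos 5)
Your overall architecture is the paper's: enumerate conductors in the stated ranges, apply a computable sufficient criterion for \emph{failure} of the norm-Euclidean property to each field, and check that the survivors are exactly the entries of Table~\ref{Table:C}. But the certificate itself is misdescribed in a way that would make the computation fail. The engine is Heilbronn's Criterion (Lemma~\ref{L:heilbronn}): write $f=a+b$ with $\chi(a)=1$ and $a,b\notin\mathcal{N}$. The computable form of this, Proposition~\ref{P:conditions}, is driven by the two smallest \emph{inert} primes $q_1<q_2$ (the non-residues of the order-$\ell$ character $\chi$) together with an auxiliary integer $r$ satisfying $\chi(r)=\chi(q_2)^{-1}$, a congruence condition modulo $q_1^2$, and the size condition $(q_1-1)(q_2r-1)\le f$. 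Your proposal instead searches for small \emph{completely split} primes. A completely split prime is a residue ($\chi(p)=1$) and, since $K$ has class number one (which one may assume, as norm-Euclidean implies PID), it lies in $\mathcal{N}$; it therefore can never witness $a\notin\mathcal{N}$, and marching through split primes produces no certificate at all. The quantities whose smallness you need a priori control over are $q_1$, $q_2$, and $r$ (the non-residue bounds of Section~\ref{S:nonresidues} and of~\cite{mcgown:jtnb}), not the least split prime.

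Two smaller points. First, you do not need to enumerate squarefree composite conductors: by the reduction in~\cite[Section~2.1]{mcgown:ijnt} one may assume $f$ is a prime with $f\equiv 1\pmod{\ell}$ (the case $f=\ell^2$ being handled separately), and the criterion of Proposition~\ref{P:conditions} is formulated and proved for prime $f$ — the proof uses primality of $f$ to rule out degenerate decompositions. For a prime conductor there is a unique such field, so there is no issue of ``conjugate fields.'' Second, the characterization of $\mathcal{N}$ as the set of integers whose valuation at every inert prime is divisible by $\ell$ requires the class number one hypothesis; this should be stated, even though it is free in context. With the certificate corrected to use $q_1,q_2,r$ as above, your plan does match the paper's computation (restricting to prime $r$ for speed, splitting the $f$-ranges into blocks, and combining with the earlier verified ranges from~\cite{mcgown:ijnt}).
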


%
%

Note that Propositions~\ref{P:0} and~\ref{P:1} imply the truth of Theorems~\ref{T:19} and~\ref{T:list}.
In the case of $\ell=3$, it is known that all $13$ of the fields listed in Table~\ref{Table:C}
are norm-Euclidean (see~\cite{smith:1969}).
In the case of $\ell=5$, Godwin~\cite{godwin:1965B} proved that $f=11$ is norm-Euclidean and
Cerri~\cite{cerri:2007} has verified this.  Up until this point,
it seems that nothing was known about the remaining fields in the table.
We use the algorithm of Cerri from~\cite{cerri:2007} (which has recently been extended by the first author in~\cite{lezowski})
with some additional modifications to show
that all five fields with $\ell=5,\,7$ in Table~\ref{Table:C} are norm-Euclidean.
In fact, we compute the Euclidean minimum
\[
  M(K)=\sup_{\xi\in K} m_K(\xi), \; \text{where }
m_K(\xi) = \inf_{\gamma\in\OK} |N(\xi-\gamma)|,
\]
for each of these fields.
It is well-known (and readily observed) that $M(K)<1$ implies that $K$ is norm-Euclidean.
\begin{proposition}\label{P:3}
Table~\ref{Table:E}
gives the Euclidean minimum $M(K)$ of the cyclic field $K$ having degree $\ell$ and conductor $f$.
\end{proposition}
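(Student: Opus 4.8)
The plan is to compute each $M(K)$ as the maximum of a norm-distance function over a compact fundamental domain, following Cerri's algorithm~\cite{cerri:2007} together with the refinements of the first author~\cite{lezowski}. First I would fix the canonical (Minkowski) embedding $K\hookrightarrow K\otimes\R\cong\R^\ell$; since each $K$ here is totally real of degree $\ell\in\{5,7\}$, this realizes $\OK$ as a full-rank lattice and the norm $N$ as a product of $\ell$ linear forms. The map $\xi\mapsto m_K(\xi)$ is invariant under translation by $\OK$, so it descends to the compact torus $T=(\R^\ell)/\OK$, and one has $M(K)=\max_{x\in T} m_K(x)$ (the supremum is attained by a standard upper-semicontinuity argument, since only finitely many lattice translates are relevant near any point). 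Before searching, I would exploit the cyclic Galois action: any $\sigma\in\mathrm{Gal}(K/\Q)$ permutes the embeddings cyclically and preserves $m_K$, so it suffices to search a fundamental domain for the combined action of $\OK$-translation and $\langle\sigma\rangle$, reducing the work by a factor of $\ell$.

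Next I would carry out a two-sided computation. For the upper bound I would subdivide the fundamental domain into small boxes and, on each box $B$, bound $\sup_{x\in B} m_K(x)$ from above: for a fixed nearby $\gamma\in\OK$ one controls $|N(x-\gamma)|$ over $B$ using that the norm is a product of linear forms, whose variation on $B$ is Lipschitz-controlled by the box diameter. Boxes whose bound exceeds the current best candidate are recursively refined, while boxes that are provably covered — some $\gamma$ forces $|N(x-\gamma)|$ below the threshold throughout $B$ — are discarded. This branch-and-bound terminates with a rigorous upper bound for $M(K)$. For the matching lower bound I would track, during the search, the rational points $\xi$ of small denominator at which $m_K(\xi)$ is largest; at such a candidate one evaluates $m_K(\xi)=\min_\gamma|N(\xi-\gamma)|$ exactly over a finite, certified set of near lattice points. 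When the upper and lower bounds coincide, that common value is the exact Euclidean minimum and the location of the maximum is pinned down as the claimed critical point.

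To make the output a theorem rather than a numerical observation, every bound must be certified: I would run the norm evaluations and per-box estimates in exact rational or interval arithmetic, so that the final identity $M(K)=$ (tabulated value) is proved, not merely observed. The main obstacle is the dimension. In degree $5$, and especially degree $7$, the fundamental domain is a $7$-dimensional region, and a naive uniform subdivision is hopeless; feasibility rests entirely on aggressive pruning and on sharp a priori estimates that keep the number of surviving boxes small. This is precisely where modifications to Cerri's algorithm are needed: tighter per-box bounds on the product of linear forms, full use of the Galois symmetry to fold the domain, and a careful choice of candidate lattice points so that most boxes are eliminated early. A secondary subtlety is certifying attainment — that the supremum genuinely equals $m_K$ at the isolated rational point found — which follows once the upper bound from the exhausted search agrees with the exact value computed at that point.
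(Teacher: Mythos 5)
Your overall strategy coincides with the paper's: run the Cerri--Lezowski branch-and-bound algorithm to produce a threshold $k$ and a finite exceptional list $\mathcal{L}$, then determine $M(K)$ from the values $m_K(x)$ for $x\in\mathcal{L}$. The genuine gap is in your second step, the ``exact'' evaluation of $m_K(\xi)=\inf_{\gamma\in\OK}|N(\xi-\gamma)|$ at a candidate point. Since $K$ is totally real of degree $5$ or $7$, the norm is an indefinite product of linear forms and the unit group is infinite, so the set of $\gamma$ with $|N(\xi-\gamma)|$ below a given bound is not confined to any Euclidean neighbourhood of $\xi$. Your ``finite, certified set of near lattice points'' therefore certifies only upper bounds $m_K(\xi)\leq |N(\xi-\gamma)|$ for the finitely many $\gamma$ you test; it does not rule out a smaller value at some distant $\gamma$, and hence does not prove the lower bound $M(K)\geq$ (tabulated value) that the proposition requires. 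For the same reason, your parenthetical claim that ``only finitely many lattice translates are relevant near any point'' is false, and the attainment and rationality of the supremum defining $M(K)$ is itself a nontrivial theorem (due to Cerri), not a routine semicontinuity argument.

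The paper closes exactly this gap with Lemma~\ref{lemma:Euclidean_minimum}. All the exceptional points produced by the algorithm have the form $\alpha/\beta$ with $N\beta=f$, and since $N(\alpha-\beta z)\equiv N\alpha\pmod{f}$ for every $z\in\OK$, one obtains the a priori lower bound
\[
  m_K\left(\frac{\alpha}{\beta}\right)\geq \frac{\min\{|t| : t\in\mathcal{N},\ t\equiv N\alpha\pmod{f}\}}{f},
\]
where $\mathcal{N}$ is the image of the norm map (computable via the splitting of small primes, since $K$ has class number one). Exhibiting one explicit $z$ whose norm attains this bound then determines $m_K(\alpha/\beta)$ exactly with no search over $\gamma$, and the unit action on $K/\OK$ transports the value to the whole orbit of critical points. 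You should either incorporate such a congruence/norm-image argument (or an equivalent unit-fundamental-domain reduction) into your second step, or explicitly delegate that certification to the cited algorithm; as written, that step of your argument establishes only an upper bound for each $M(K)$.
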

\begin{table}
\begin{tabular}{LLLLLL}
\toprule
\ell & \multicolumn{3}{c}{5} & \multicolumn{2}{c}{7} \\  \cmidrule(lr){2-4} \cmidrule(lr){5-6} 
f & 11 & 31 & 41 & 29 & 43 \\
M(K) & 1/11 & 25/31 & 27/41 & 17/29 & 37/43 \\ \bottomrule
\end{tabular}
\caption{Euclidean minima}\label{Table:E}
\end{table}

It appears that the fields of degree $11$ are currently out of reach of the algorithm;
problems arise both from the time of computation required and
from issues related to precision.
In light of the discussion above, observe that the truth of the previous three propositions immediately
implies Theorems~\ref{T:quintic.septic},~\ref{T:19}, and~\ref{T:list}.
We detail the computations necessary to justify Propositions~\ref{P:1} and \ref{P:3}
in Sections~\ref{S:comp1} and~\ref{S:comp3} respectively.

In Section~\ref{S:bounds} we derive the conductor bounds given in Proposition~\ref{P:0}.
This involves a trick which allows us to weaken the condition for ``non-norm-Euclideanity''
from~\cite{mcgown:ijnt} provided $\ell>3$.
We are also able to accomplish this in the cubic case by a different argument
that takes advantage of the fact that the character takes only three values.
This is carried out in Section~\ref{S:cubic}.


Finally, the remainder of the paper 
is devoted to supplying the necessary justification for the upper bound on the conductor given in Theorem~\ref{T:cubic.new}.
The proof involves applying some recent results of Trevi\~no concerning non-residues
(see~\cite{trevino:mjcnt, trevino:jnt, trevino:ijnt}) together with ideas in~\cite{mcgown:jnt, mcgown:ijnt}.
For completeness, we provide improved (unconditional) conductor bounds for degrees $\ell>3$ as well.
However, as is the case when $\ell=3$,
these bounds are currently beyond our computational limits.
\begin{proposition}\label{P:2}
Table~\ref{Table:B} gives (unconditional)
conductor bounds for norm-Euclidean fields in $\mathcal{F}$.
\end{proposition}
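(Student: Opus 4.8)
The plan is to establish Proposition~\ref{P:2} as the unconditional counterpart of Proposition~\ref{P:0}: the two rest on the \emph{same} combinatorial criterion for non-norm-Euclideanity, and differ only in the arithmetic input used to guarantee that the criterion is met once the conductor is large. I would begin by recalling from Section~\ref{S:bounds} the weakened sufficient condition for a cyclic field $K$ of prime degree $\ell>3$ and conductor $f$ to fail to be norm-Euclidean. Writing $\chi$ for a character of order $\ell$ and conductor $f$ cutting out $K$, this condition essentially reduces, for $\ell>3$, to the existence of a small non-residue — a rational prime $q$ with $\chi(q)\neq 1$ lying below an explicit threshold of order $f^{1/2}$, possibly together with one further small admissible quantity in the same range — which produces a point $\xi\in K$ with $m_K(\xi)\geq 1$, whence $M(K)\geq 1$ and $K$ is not norm-Euclidean.

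Having isolated this criterion, the remaining task is purely to force the required small non-residue to exist unconditionally. Under the GRH one bounds the least such $q$ by a power of $\log f$, which is how Proposition~\ref{P:0} and Table~\ref{Table:A} are obtained; here I would instead invoke Trevi\~no's explicit estimates for the least $\ell$-th power non-residue (equivalently, the least prime $q$ with $\chi(q)\neq 1$) from~\cite{trevino:mjcnt, trevino:jnt, trevino:ijnt}, combined with the ideas of~\cite{mcgown:jnt, mcgown:ijnt}. These rest on an effective form of Burgess's inequality and yield $q\ll_\ell f^{\theta}$ with an explicit constant and an exponent $\theta$ strictly below the $1/2$ appearing in the criterion. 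Comparing the two, for each degree $\ell$ with $5\leq\ell\leq 97$ Trevi\~no's guarantee places the non-residue below the criterion's threshold as soon as $f$ exceeds an explicit value $f_0(\ell)$, obtained by solving an inequality of the shape $C_\ell f^{\theta}<\kappa f^{1/2}$; the entries of Table~\ref{Table:B} are precisely these $f_0(\ell)$.

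The main obstacle is explicitness and optimization rather than any conceptual difficulty. Burgess's bound carries a large implied constant together with a free moment parameter, and Trevi\~no's refinements are exactly what render the resulting $f_0(\ell)$ numerically usable; for each $\ell$ one must tune these parameters to balance the exponent $\theta$ against the size constraint so as to minimize $f_0(\ell)$, keeping every constant effective throughout. One must also verify that the weakened criterion of Section~\ref{S:bounds} applies uniformly across the whole range $\ell>3$. The cubic case is deliberately excluded from this argument, since there $\chi$ takes only three values and the weakened criterion is unavailable; that case is handled separately in the course of establishing Theorem~\ref{T:cubic.new}.
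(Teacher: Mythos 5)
Your overall strategy does match the paper's: Proposition~\ref{P:2} is obtained by feeding unconditional (Burgess/Trevi\~no) estimates into the same non-norm-Euclideanity criteria that underlie the GRH bounds, and then solving an explicit inequality for the threshold on $f$. But your description of what must be made small is off in a way that would matter if you tried to execute the argument. For $\ell>3$ the criterion is Lemma~\ref{lemma:other_bound}: one must force $\max\bigl\{q_1,\,\tfrac{2.1}{\ell}f^{1/\ell}\log f\bigr\}\,q_2\,r\leq f$, a product of \emph{three} quantities --- the two smallest inert primes $q_1<q_2$ and the least positive integer $r$ coprime to $q_1q_2$ with $\chi(r)=\chi(q_2)^{-1}$. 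A single non-residue below a threshold ``of order $f^{1/2}$'' is not the shape of the condition. What the paper actually uses is $q_1q_2\leq 2.7\,f^{1/2}(\log f)^2$ (Corollary~\ref{C:A3}, which together with Corollary~\ref{C:A2} and Proposition~\ref{P:trevino} is a genuine new ingredient proved in Section~\ref{S:nonresidues} from Trevi\~no's explicit estimates --- your proposal treats these inputs as off-the-shelf, but sharpening them is a substantial part of the work) and, separately, $r\leq (D_2(k)(\ell-1))^k f^{(k+1)/(4k)}(\log f)^{1/2}$ from~\cite[Proposition~5.7]{mcgown:ijnt} with Trevi\~no's improved Burgess constants. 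The bound on $r$ --- the least element of a \emph{prescribed coset} of the $\ell$-th power residues, not merely the least non-residue --- is the crux; the moment parameter $k$ (taken to be $4$ for $\ell=3,5,7$ and $3$ otherwise) determines the exponent in the final inequality $Cf^{(3k+1)/(4k)}(\log f)^{5/2}<f$, which is what produces the entries of Table~\ref{Table:B}.

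Your last claim is also inaccurate: the cubic case is not excluded from this proposition. Table~\ref{Table:B} contains the entry $f<10^{50}$ for $\ell=3$, and the paper proves it in the same section by substituting Proposition~\ref{proposition:bound_on_f} (which requires $10q_1^2q_2\leq f$ and $3q_1q_2r\leq f$) for Lemma~\ref{lemma:other_bound}; that computation is exactly what supplies the upper bound in Theorem~\ref{T:cubic.new}, rather than the other way around.
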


\begin{table}
\centering
\begin{tabular}{lllllllll}
\toprule
$\ell$ & $3$ & $5$ & $7$ & $11$ & $13$ & $17$ & $19$ & $23$\\
\cmidrule{2-9}
$f<$ & $\num{e50}$ & $\num{e55}$ & $\num{e59}$ & $\num{e64}$ & $\num{e66}$ & $\num{e68}$ & $\num{e69}$ & $\num{e71}$  \\
\midrule
\end{tabular}
\begin{tabular}{lllllllll}
$\ell$ & $29$ & $31$ & $37$ & $41$ & $43$ & $47$ & $53$ & $59$  \\
\cmidrule{2-9}
$f<$ & $\num{e73}$ & $\num{e74}$ & $\num{e75}$ & $\num{e76}$ & $\num{e77}$ & $\num{e77}$ & $\num{e78}$ & $\num{e79}$ \\
\midrule
\end{tabular}
\begin{tabular}{lllllllll}
$\ell$ & $61$ & $67$ & $71$ & $73$ & $79$ & $83$ & $89$ & $97$  \\
\cmidrule{2-9}
$f<$ & $\num{e80}$ & $\num{e80}$ & $\num{e81}$ & $\num{e81}$ & $\num{e82}$& $\num{e82}$ & $\num{e83}$ & $\num{e84}$   \\
\bottomrule
\end{tabular}
\caption{Conductor bounds for NE fields in $\mathcal{F}$}\label{Table:B}
\end{table}



\section{Computation for Proposition~\ref{P:1}}\label{S:comp1}

Let $K$ denote the cyclic field of prime degree $\ell$ and conductor $f$.
We suppose that $K$ has class number one.
Assume that $(f,\ell)=1$ so that $K$ is not the field with $f=\ell^2$.
No field of this type having $f=\ell^2$ is norm-Euclidean anyhow except for
$\Q(\zeta_9+\zeta_9^{-1})$; this is~\cite[Thereom~4.1]{mcgown:ijnt}.
We may assume that $f$ is a prime with $f \equiv 1\pmod{\ell}$;
see~\cite[Section 2.1]{mcgown:ijnt}.
Denote by $q_1<q_ 2$ the two smallest rational primes that are inert in $K$.
Let $\chi$ denote any fixed primitive Dirichlet character of modulus $f$ and order $\ell$ so that
a rational prime $p$ splits in $K$ if and only if $\chi(p)=1$.
The following theorem is proved in~\cite{mcgown:ijnt}.
\begin{proposition}[{\cite[Theorem~3.1]{mcgown:ijnt}}]\label{P:conditions}
  Suppose that there exists $r\in\Z^+$
  satisfying the following conditions:
\begin{gather*}
    (r,q_1 q_2)=1,
    \quad
    \chi(r)=\chi(q_2)^{-1},\\
    r q_2 k\not\equiv f \pmod{q_1^2}
    \;
    \text{ for all }
    k=1,\dots,q_1-1, \\
    (q_1-1)(q_2 r-1)\leq f.
\end{gather*}
   Then $K$ is not norm-Euclidean.
\end{proposition}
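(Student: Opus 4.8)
The plan is to exhibit a single point $\xi\in K$ whose Euclidean minimum is at least $1$; since $K$ is norm-Euclidean precisely when $m_K(\xi)<1$ for every $\xi$, one such point suffices. Because $f$ is totally ramified we may write $(f)=\mathfrak{f}^{\ell}$ with $N\mathfrak{f}=f$, and as $K$ has class number one we have $\mathfrak{f}=(\pi)$ for some $\pi\in\OK$ with $N(\pi)=\pm f$. I would take the witness to be $\xi=\alpha/\pi$ for a suitable $\alpha\in\OK$ with $\alpha\notin\mathfrak{f}$. Then $\xi-\gamma=(\alpha-\pi\gamma)/\pi$ and $N(\xi-\gamma)=N(\alpha-\pi\gamma)/(\pm f)$, while $\alpha-\pi\gamma$ runs over the residue class $\alpha+\mathfrak{f}$ as $\gamma$ runs over $\OK$. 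Thus $m_K(\xi)\ge 1$ is equivalent to the purely arithmetic statement that every $\eta\in\OK$ with $\eta\equiv\alpha\pmod{\mathfrak{f}}$ satisfies $|N(\eta)|\ge f$. The class $\alpha+\mathfrak{f}$ depends only on $c:=\alpha\bmod\mathfrak{f}\in\OK/\mathfrak{f}\cong\F_f$, so the real parameter to be chosen is $c$.

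The key structural fact is that norms are essentially $\chi$-trivial. A rational prime has a prime ideal of residue degree $1$ exactly when it splits, that is, when $\chi(p)=1$, whereas an inert prime $q$ contributes $q^{\ell}$ to any norm and satisfies $\chi(q)^{\ell}=1$; hence every $\eta$ with $f\nmid N(\eta)$ has $\chi(N(\eta))=1$. Moreover, because $\mathfrak{f}$ is totally ramified the Galois action on $\OK/\mathfrak{f}$ is trivial, so $N(\eta)\equiv(\eta\bmod\mathfrak{f})^{\ell}\equiv c^{\ell}\pmod{f}$ for every $\eta$ in the class $c$. Consequently the only integers of absolute value $<f$ that can occur as $N(\eta)$ for $\eta\equiv c$ are $m_0:=(c^{\ell}\bmod f)\in\{1,\dots,f-1\}$ and $m_0-f$, and it suffices to choose $c$ so that neither is represented by the class. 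Condition~2 gives $\chi(rq_2)=\chi(r)\chi(q_2)=1$, so $q_2r$ is an $\ell$th-power residue mod $f$ and I may pick $c$ with $c^{\ell}\equiv q_2r\pmod{f}$; condition~4 keeps $q_2r$ below $f$, whence $m_0=q_2r$ and the two candidates become $q_2r$ and $q_2r-f$.

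The candidate $m_0=q_2r$ is eliminated immediately: by condition~1 we have $q_2\nmid r$, so $q_2$ divides $q_2r$ exactly once; but in any norm the inert prime $q_2$ occurs to a multiple of $\ell$, so $q_2r$ is not a norm of any element of $\OK$ whatsoever, let alone one lying in the class $c$. This is the clean half of the argument and uses only that $q_2$ is inert.

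The remaining candidate $q_2r-f$ is the crux: I must show that no $\eta\equiv c\pmod{\mathfrak{f}}$ has $N(\eta)=q_2r-f$, equivalently $|N(\eta)|=f-q_2r$. When $q_1\mid(f-q_2r)$ this again follows from inertness, since condition~3 with $k=1$ says $q_1^2\nmid(f-q_2r)$, so $q_1$ divides $f-q_2r$ exactly once and the value cannot be a norm. The genuine difficulty is the case $q_1\nmid(f-q_2r)$, where no single inert prime supplies an obstruction. The plan is to analyze the finitely many elements of the class $c$ whose norm could lie in $(-f,\,0)$, to parametrize them by their residues modulo $q_1$ — these account for the $q_1-1$ values $k=1,\dots,q_1-1$ — and to invoke the full strength of condition~3, namely $rq_2k\not\equiv f\pmod{q_1^2}$ for each such $k$, in order to rule out every representation modulo $q_1^2$; condition~4, in the form $(q_1-1)(q_2r-1)\le f$, is exactly what guarantees that these candidates exhaust all class members with $|N|<f$, so that no representation escapes the congruence sieve. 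Establishing this last congruence obstruction — matching the parameter $k$ to the arithmetic of the class $c$ modulo $q_1^2$ and controlling the range with the size bound — is the main obstacle and the technical heart of the proof, whereas the first three steps are comparatively routine.
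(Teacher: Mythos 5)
Your framework---exhibiting $\xi=\alpha/\pi$ with $N(\pi)=\pm f$ and showing that no element of the residue class $\alpha\bmod(\pi)$ has norm of absolute value less than $f$---is sound, and it is essentially an unwinding of Heilbronn's Criterion, which is exactly the tool the cited proof applies. The reductions you carry out (norms are $\chi$-trivial away from $f$, $N(\eta)\equiv c^{\ell}\pmod f$ on the class, hence only two candidate norm values $m_0$ and $m_0-f$) are all correct. The problem is your choice of witness. By fixing $c^{\ell}\equiv q_2r\pmod f$ you have committed to the decomposition $f=q_2r+(f-q_2r)$, i.e.\ to $k=1$, and you then cannot dispose of the candidate $q_2r-f$ when $q_1\nmid(f-q_2r)$. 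Your proposed rescue for that case does not parse: once $c$ is fixed, the only candidate values in $(-f,f)$ are $q_2r$ and $q_2r-f$---there is no family of ``class members with $|N|<f$'' indexed by $k=1,\dots,q_1-1$ left to sieve by condition~3. If $f-q_2r$ happens to have every inert prime to a multiplicity divisible by $\ell$ (which nothing in the hypotheses forbids), your witness simply fails, and no amount of congruence analysis modulo $q_1^2$ on that one class will recover it.

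The missing idea is to let the arithmetic choose $k$ for you: since $(q_2r,q_1)=1$ and $q_1\nmid f$, there is a unique $k\in\{1,\dots,q_1-1\}$ with $q_2rk\equiv f\pmod{q_1}$, and the witness should be the class with $c^{\ell}\equiv q_2rk\pmod f$, equivalently the Heilbronn decomposition $f=a+b$ with $a=q_2rk$, $b=f-q_2rk$. Then everything closes in one line each: $\chi(a)=\chi(k)=1$ because every prime factor of $k<q_1$ splits; $a\notin\mathcal{N}$ because $q_2$ is inert and $q_2\parallel a$ (here $(r,q_2)=1$ and $k<q_1<q_2$); $q_1\mid b$ by the choice of $k$ while condition~3 (applied to this single $k$) gives $q_1^2\nmid b$, so $b\notin\mathcal{N}$; and condition~4 gives $b\geq f-(q_1-1)q_2r\geq-(q_1-1)>-q_1$, which combined with $q_1\mid b$ and the primality of $f$ (ruling out $b=0$) forces $b>0$. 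Note in particular that condition~3 is quantified over all $k$ only because the relevant $k$ is not known in advance; the proof uses it once, not as a sieve. Your steps eliminating $q_2r$ via the inert prime $q_2$, and eliminating $f-q_2r$ when $q_1$ divides it exactly once, are correct special cases of this, but the general case requires changing the witness, not refining the analysis of the one you chose.
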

Let $\mathcal{N}$ denote the image of the norm map from $\OK$ to $\Z$.
The proof of the previous proposition relies on:
\begin{lemma}[Heilbronn's Criterion]\label{L:heilbronn}
If one can write $f=a+b$, with $a,b>0$, $\chi(a)=1$, $a \notin \mathcal{N}$, 
$b\notin \mathcal{N}$, then
$K$ is not norm-Euclidean.
\end{lemma}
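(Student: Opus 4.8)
The plan is to exhibit an explicit $\xi\in K$ with $m_K(\xi)\geq 1$, which by the stated equivalence shows $K$ is not norm-Euclidean. Since $f$ is the prime conductor it is totally ramified, $f\OK=\mathfrak p^{\ell}$ with $N\mathfrak p=f$, and because $h_K=1$ we may write $\mathfrak p=(\pi)$. I will take $\xi=r/\pi$, where $r\in\{1,\dots,f-1\}$ is chosen below. As $N(\xi-\gamma)=N(r-\pi\gamma)/N(\pi)$ and $|N\pi|=f$, one has
\[
m_K(r/\pi)=\frac{1}{f}\min\bigl\{\,|N(\mu)| : \mu\in\OK,\ \mu\equiv r\pmod{\mathfrak p}\,\bigr\},
\]
the class $r+\mathfrak p$ being exactly $\{r-\pi\gamma:\gamma\in\OK\}$. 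So it suffices to choose $r\not\equiv 0$ for which every $\mu\equiv r\pmod{\mathfrak p}$ has $|N(\mu)|\geq f$; then $m_K(\xi)\geq 1$.

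The choice of $r$ uses $\chi(a)=1$. Because $\ell\mid f-1$ and $\chi$ is a character of $\F_f^{\times}$ of order $\ell$, its kernel is precisely the group of $\ell$-th powers; hence $\chi(a)=1$ forces $a$ to be an $\ell$-th power modulo $f$, and I pick $r$ with $r^{\ell}\equiv a\pmod f$.

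Suppose, for contradiction, that some $\mu\equiv r\pmod{\mathfrak p}$ has $0<|N(\mu)|<f$ (the norm is nonzero since $r\not\equiv 0$). The key computation is that, as $\mathfrak p$ is totally ramified, the Galois group acts trivially on the residue field $\OK/\mathfrak p\cong\F_f$; therefore each conjugate $\sigma^{i}(\mu)$ reduces to $r$ modulo $\mathfrak p$, whence $N(\mu)\equiv r^{\ell}\equiv a\pmod f$. Since $0<a<f$, the only integers in $(-f,f)$ congruent to $a$ are $a$ and $a-f=-b$, so $N(\mu)\in\{a,-b\}$. Finally, because $\ell$ is odd the unit $-1$ has norm $-1$, so $\mathcal N$ is stable under negation; thus $N(\mu)\in\mathcal N$ yields $a\in\mathcal N$ or $b\in\mathcal N$, contradicting the hypotheses. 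Hence no such $\mu$ exists and $K$ is not norm-Euclidean.

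The main obstacle is the norm congruence $N(\mu)\equiv r^{\ell}\pmod f$: it rests on the total ramification of $f$ forcing the decomposition group to act trivially on the residue field, which is exactly what collapses the full product of conjugates to a single $\ell$-th power. The remaining ingredients—principality of $\mathfrak p$, solvability of $r^{\ell}\equiv a$, and the sign symmetry of $\mathcal N$—are routine once this is established.
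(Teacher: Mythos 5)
Your proof is correct. The paper states Heilbronn's Criterion without proof (it is quoted from \cite{heilbronn:cyclic} and \cite{mcgown:ijnt}), and your argument---taking $\xi=r/\pi$ with $r^{\ell}\equiv a\pmod{f}$, using total ramification (so that $\sigma(\mu)\equiv r\pmod{\mathfrak{p}}$ for every conjugate) to get $N(\mu)\equiv a\pmod{f}$ on the coset $r+\mathfrak{p}$, and then excluding $N(\mu)\in\{a,-b\}$ via the hypotheses together with $N(-1)=-1$---is precisely the classical one, with all the needed details (nonvanishing of $N(\mu)$, solvability of $r^{\ell}\equiv a$, and the harmlessness of assuming $h_K=1$) in place.
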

The advantage of Proposition~\ref{P:conditions} is that it requires far fewer steps
than applying Lemma~\ref{L:heilbronn} directly.
As $K$ has class number
one, we have that an integer $n \neq 0$ lies in $\mathcal{N}$
if and only if $\ell$ divides the $p$-adic valuation of $n$ for all
primes $p$ which are inert in $K$ (i.e. all primes $p$ for which
$\chi(p) \neq 0,1$).

To prove Proposition~\ref{P:1} we find $q_1,q_2,r$ as described above.
To save time, we look only for prime values of $r$.  See~\cite{mcgown:ijnt} for the details.
By applying Proposition~\ref{P:conditions}, we show there are no norm-Euclidean fields of the given form with $\num{e4}\leq f\leq F_\ell$ where
$F_\ell=\num{e13}$ when $3\leq \ell\leq 50$ and 
$F_\ell$ equals the value in Table~\ref{Table:A} when $50\leq\ell\leq 100$.
For example, Table~\ref{table:sample} shows the values for the last ten fields in our calculation when $\ell=97$.

To better manage the computation, the values of $f$ being considered (for a given~$\ell$) were broken into
subintervals of length $\num{e9}$.
As mentioned earlier, this computation took $3.862$ years of CPU time on a 96-core computer cluster.
Another computation of the same nature was performed to check that there are no norm-Euclidean
fields when $\ell=3$ with $\num{e13}\leq f\leq 2\cdot \num{e14}$, which took an additional $3.104$ years of CPU time on the same cluster.
Combining the computation just described with the results mentioned in Section~\ref{S:intro} proves Proposition~\ref{P:1}.

\begin{table}\label{table:sample}
\begin{tabular}{S[table-format=14]S[table-format=1]S[table-format=1]S[table-format=4]}
\toprule
{$f$} & {$q_1$} & {$q_2$} & {$r$}\\
\midrule
59999999974303 & 2 & 3 & 431\\
59999999975273 & 2 & 3 & 1933\\
59999999977213 & 2 & 3 & 241\\
59999999979929 & 2 & 3 & 673\\
59999999981869 & 2 & 3 & 797\\
59999999989823 & 2 & 3 & 2719\\
59999999990599 & 2 & 3 & 199\\
59999999995643 & 2 & 3 & 383\\
59999999999717 & 2 & 3 & 3709\\
59999999999911 & 2 & 3 & 2663\\
\bottomrule
\end{tabular}
\caption{Example calculation}
\end{table}




%
%
%


\section{Computation for Proposition~\ref{P:3}}\label{S:comp3}

Previously, Cerri computed that the Euclidean minimum
of the cyclic quintic field with conductor $f=11$ is equal to $1/11$
(see~\cite{cerri:2007}).
We compute the Euclidean minimum of the
remaining four fields $K$ with $\ell=5,\,7$ in Table~\ref{Table:C},
using the algorithm described in~\cite{lezowski}.

The algorithm is divided into two main parts:
\begin{itemize}
\item
Using an embedding of $K$ into $\R^\ell$,
we try to find a finite list of points $\mathcal{L} \subseteq K$
and some real number $k$ such that any point $x \in K \setminus \mathcal{L}$ we have $m_K(x) < k$.
If $k < 1$ and $\mathcal{L} = \emptyset$,
this proves that $K$ is norm-Euclidean.
\item
Next, we compute the Euclidean minimum of the remaining
points in $\mathcal{L}$. If $\max \{ m_K(x)\mid x \in \mathcal{L} \} > k$, then
\[
M(K) = \max \{ m_K(x)\mid x \in \mathcal{L} \}. 
\]
If not, we start again with smaller $k$.
\end{itemize}
The algorithm also returns the finite set of \emph{critical points}, that
is to say the points $x \in K/\OK$ satisfying $M(K) = m_K(x)$.
The results obtained are given in Table \ref{table:computation_euc_minima},
where $C$ is the cardinality of the set of critical points.

\begin{table}
\begin{tabular}{LLLLL}
\toprule
  \ell & f  & M(K) & C & \text{CPU time} \\ \midrule
5 & 31  & 25/31 & 10 & 13.2 \text{ min}\\
5 & 41  & 27/41 & 10 & 67.0 \text{ min}\\
7 & 29 &  17/29 & 14 & 95.6 \text{ min}\\
7 & 43 &  37/43 & 14 & 475.8 \text{ min}\\
\bottomrule
\end{tabular}
\caption{Computation of the Euclidean minima}\label{table:computation_euc_minima}
\end{table}

In carrying out these computations, the second part of the algorithm takes far longer than the first.
Nevertheless, we can improve
the running time with the following observation:
The points considered in our four cases are always of the form $\alpha/\beta$ where $\alpha$, $\beta$
are nonzero elements of $\OK$ such that $N\beta$ is the conductor $f$.
This provides some information on the Euclidean minimum of $\alpha/\beta$.

\begin{lemma}\label{lemma:Euclidean_minimum}
Let $\mathcal{N}$ denote the image of the norm map $N$.
Let $\alpha$ and $\beta$ be nonzero elements of $\OK$ such that
$N\beta = f$. Then
\[
  m_K \left( \frac{\alpha}{\beta} \right) \geq \frac{\min \left\{ |t| : t \in \mathcal{N},\, t \equiv N\alpha \pmod f \right\}}{f}.
\]
\end{lemma}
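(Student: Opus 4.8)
The plan is to unwind the definition of $m_K$ and reduce the whole statement to a single congruence modulo $f$. First I would use the multiplicativity of the norm: for any $\gamma\in\OK$,
\[
\left|N\!\left(\frac{\alpha}{\beta}-\gamma\right)\right| = \frac{|N(\alpha-\gamma\beta)|}{|N\beta|} = \frac{|N(\alpha-\gamma\beta)|}{f},
\]
so that $m_K(\alpha/\beta) = \tfrac{1}{f}\inf_{\gamma\in\OK}|N(\alpha-\gamma\beta)|$. Since each value $N(\alpha-\gamma\beta)$ obviously lies in $\mathcal{N}$, the lemma will follow at once provided I can show that every such value is congruent to $N\alpha$ modulo $f$.

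The heart of the argument is exactly this congruence. Recall from the setup that $f$ is a prime with $f\equiv 1\pmod{\ell}$, so $f$ is totally ramified in $K$, say $(f)=\mathfrak{p}^{\ell}$ with $N\mathfrak{p}=f$. Because $N\beta=f$ is prime, the ideal $(\beta)$ has prime norm and hence is prime; having norm $f$, it must be the unique prime $\mathfrak{p}$ above $f$. In particular $\beta\in\mathfrak{p}$, whence $\alpha-\gamma\beta\equiv\alpha\pmod{\mathfrak{p}}$. Next I would exploit total ramification: the inertia group at $\mathfrak{p}$ is the whole Galois group, so every $\sigma\in\mathrm{Gal}(K/\Q)$ acts trivially on the residue field $\OK/\mathfrak{p}\cong\F_f$. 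Consequently, for any $x\in\OK$,
\[
N(x)=\prod_{\sigma}\sigma(x)\equiv x^{\ell}\pmod{\mathfrak{p}}.
\]
Applying this to $x=\alpha-\gamma\beta$ and to $x=\alpha$, and using $\alpha-\gamma\beta\equiv\alpha\pmod{\mathfrak{p}}$, gives $N(\alpha-\gamma\beta)\equiv N\alpha\pmod{\mathfrak{p}}$. As both sides are rational integers and $\mathfrak{p}\cap\Z=(f)$, this says precisely $N(\alpha-\gamma\beta)\equiv N\alpha\pmod{f}$.

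To finish, I would note that for each $\gamma$ the integer $t=N(\alpha-\gamma\beta)$ satisfies $t\in\mathcal{N}$ and $t\equiv N\alpha\pmod{f}$, hence
\[
|N(\alpha-\gamma\beta)|\geq \min\left\{\,|t|:t\in\mathcal{N},\ t\equiv N\alpha \pmod{f}\,\right\}.
\]
Taking the infimum over $\gamma\in\OK$ and dividing by $f$ yields the claimed inequality; the set on the right is nonempty since $N\alpha$ itself belongs to it, so the minimum is well defined.

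The main obstacle is the reduction congruence $N(x)\equiv x^{\ell}\pmod{\mathfrak{p}}$: once the total ramification of $f$ and the identification $(\beta)=\mathfrak{p}$ are in hand, everything else is routine bookkeeping. One should take slight care with the degenerate case $f\mid N\alpha$, where $0\in\mathcal{N}$ lies in the indexing set and the bound becomes the trivial $m_K(\alpha/\beta)\geq 0$; the inequality still holds, and in the cases of interest $\alpha\notin\mathfrak{p}$, so the right-hand side is strictly positive.
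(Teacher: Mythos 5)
Your proof is correct and follows essentially the same route as the paper: reduce to $m_K(\alpha/\beta)=\frac{1}{f}\inf_z|N(\alpha-\beta z)|$ and observe that every value $N(\alpha-\beta z)$ lies in $\mathcal{N}$ and is congruent to $N\alpha$ modulo $f$. The only difference is that the paper asserts the congruence $N(\alpha-\beta z)\equiv N\alpha \pmod{f}$ without comment, whereas you supply the (correct) justification via total ramification of $f$ and the identification $(\beta)=\mathfrak{p}$.
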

\begin{proof}
By definition of the Euclidean minimum,
\[
m_K \left( \frac{\alpha}{\beta} \right) = \frac{ \min \left\{ |N(\alpha-\beta z)| : z \in \OK \right\}}{N\beta} = \frac{\min \left\{ |N(\alpha-\beta z)| : z \in \OK \right\}}{f}.
\]
But for any $z\in \OK$, 
$N(\alpha-\beta z) \equiv N\alpha \pmod f$. As $N(\alpha-\beta z)$ is obviously an element
of $\mathcal{N}$, the result follows.
\end{proof}

In particular, for a point $\alpha/\beta$ of this form, 
if we can find some $z \in \OK$ such that
\[
|N(\alpha-\beta z)| =  \min \left\{ |t| : t \in \mathcal{N},\, t \equiv N\alpha \pmod f \right\},
\]
we will then have
\[
  m_K \left( \frac{\alpha}{\beta} \right) = \frac{|N(\alpha-\beta z)|}{f}
  .
\]


\bigskip

To illustrate this idea, consider the field $K=\Q(x)$
where $x^5 - x^4 - 12x^3 + 21x^2 + x - 5 =0$, of degree $5$ and conductor $31$.
At the end of running the algorithm, we get a list
$\mathcal{L}$ of ten points of the form
$\alpha/\beta$ as above.
One of the points found has
$\alpha=-\frac{106}{5}x^4 - \frac{162}{5}x^3 + \frac{866}{5}x^2 - \frac{28}{5}x - 41$, 
and $\beta = -4x^4 - 6x^3 + 33x^2 - 2x - 9$.
Then $N\alpha = -25$ (and $N\beta = 31$).
As $6 \notin \mathcal{N}$, Lemma \ref{lemma:Euclidean_minimum} implies that
 $m_K \left( \alpha / \beta\right) \geq 25/31$.  Of course, we have an equality because
$\left|N \left( \alpha/\beta\right) \right|=25/31$.
Besides, the ten points found are in the same orbit under the action of the units
on $K/\OK$. Thus their Euclidean minimum is equal to $25/31$, which
is the Euclidean minimum of $K$, and they are the set of critical
points, which has cardinality $10$.

\begin{remark}
Lemma~\ref{lemma:Euclidean_minimum} is a variation on Heilbronn's Criterion
(Lemma~\ref{L:heilbronn}). 
If 
\[
  \min \left\{ |t| : t \in \mathcal{N}, t \equiv N\alpha \pmod f \right\} > f,
\]
then we can deduce
from it an equality
\[
  f= a+b,
\]
where $a$ is the integer in $(0,f)$ such that $N\alpha \equiv a \pmod f$ and
$b = f-a$.
\end{remark}

%

\begin{remark}
The algorithm may also be applied to calculate the Euclidean minimum of cyclic cubic
number fields.  Table~\ref{table:cubic_algorithm} presents the results obtained in some of the
norm-Euclidean cases where the Euclidean minimum was previously unknown.
For conductors $f<103$, one can refer to \cite{lemmermeyer}.  As observed in \cite{godwin.smith:1993},
the field with conductor $157$ seems harder to deal with;
to date, no one has successfully computed the Euclidean minimum of this field.
\end{remark}

\begin{table}
\begin{tabular}{LLLLL}
\toprule
   f  & M(K)   & C & \text{CPU time} \\ \midrule
 103  & 93/103 & 6 & 12\text{ minutes}\\
 109  & 76/109 & 6 & 1\text{ day } 20\text{ hours}\\
 127  & 94/127 & 6 & 2\text{ days } 17\text{ hours} \\
\bottomrule
\end{tabular}
\caption{Euclidean minima in some cyclic cubic cases}\label{table:cubic_algorithm}
\end{table}

\bigskip

%


\section{Improved GRH conductor bounds when \texorpdfstring{$\ell>3$}{the degree is larger than 3}}\label{S:bounds}

We adopt the notation given in the first paragraph of Section~\ref{S:comp1}.
In addition, from now on and throughout the paper,
$r\in\Z^+$ will denote the smallest positive integer such that
$(r, q_1 q_2)=1$ and $\chi(r)=\chi(q_2)^{-1}$.
However, we do not assume any congruence conditions on $r$.
The following lemma is an improvement of statement (3) from
Theorem~3.1 of~\cite{mcgown:ijnt}; it is essentially a direct application of
the same theorem.

\begin{lemma}\label{lemma:other_bound}
Let us assume $q_1 \neq 2,3,7$. If $K$ is norm-Euclidean, then
\[
  f < \max \left\{q_1 ,\; \frac{2.1}{\ell} f^{1/\ell} \log f \right\} q_2 r.
\]
\end{lemma}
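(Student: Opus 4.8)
The plan is to derive the bound from the finitely-many-conditions criterion in Proposition~\ref{P:conditions} by exhibiting an explicit choice of the auxiliary integer satisfying all four of its hypotheses; if such an integer exists, then $K$ is \emph{not} norm-Euclidean, so the contrapositive gives the stated conductor bound. Since $r$ is now defined to be the \emph{smallest} positive integer with $(r,q_1q_2)=1$ and $\chi(r)=\chi(q_2)^{-1}$, the first two conditions of Proposition~\ref{P:conditions} are automatic for $r$ itself. The real content is to produce an admissible value satisfying the congruence condition modulo $q_1^2$ together with the size condition $(q_1-1)(q_2r'-1)\le f$, and to show that this is possible precisely when $f$ fails to be smaller than the asserted maximum.

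First I would recall from \cite{mcgown:ijnt} how the congruence condition $r'q_2k\not\equiv f\pmod{q_1^2}$ for $k=1,\dots,q_1-1$ is handled: one is free to replace $r$ by $r+tq_1^2$ (or by $r$ times a suitable unit mod $q_1^2$) without disturbing the coprimality and character conditions, so one can always steer the residue class of $r'q_2$ modulo $q_1^2$ into the allowed set. The number of forbidden residues is at most $q_1-1$ out of the $q_1^2$ classes (really out of the $\varphi(q_1^2)$ units), so for $q_1\neq 2,3,7$ there is enough room to find an admissible representative with a controlled increase in size. This is exactly where the hypothesis $q_1\neq 2,3,7$ enters: for those small primes the proportion of blocked residues is too large relative to the available units, and the counting argument fails. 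I would quote Theorem~3.1 of \cite{mcgown:ijnt} to package this step, since the statement says the present lemma is ``essentially a direct application'' of it.

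Next I would convert the size condition into the displayed bound. Writing $r'$ for the admissible representative produced above, the condition $(q_1-1)(q_2r'-1)\le f$ rearranges to a comparison between $f$ and $q_2r'$. The point of the $\max$ on the right-hand side is to bound $r'$: either the naive smallest $r$ already lands in an allowed residue class (giving the term $q_1$, coming from $(q_1-1)q_2r\lesssim q_1 q_2 r$), or one must perturb $r$, in which case the perturbation is controlled by an estimate on the least $n$-th power non-residue / least element in a prescribed coset of the character, yielding the factor $\tfrac{2.1}{\ell}f^{1/\ell}\log f$. Here I would invoke the GRH-conditional bound on the size of $r$ (the least integer in the coset $\chi(r)=\chi(q_2)^{-1}$), which under GRH is $O_\ell(f^{1/\ell}\log^2 f)$ type; the constant $2.1/\ell$ and the single logarithm are the refined explicit form one extracts. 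The contrapositive of Proposition~\ref{P:conditions} then reads: if $K$ is norm-Euclidean, \emph{no} admissible triple exists, which forces $f<\max\{q_1,\,\tfrac{2.1}{\ell}f^{1/\ell}\log f\}\,q_2r$, as claimed.

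The main obstacle I expect is the careful bookkeeping in the congruence-dodging step for general $q_1\neq 2,3,7$: one must verify that among the $\varphi(q_1^2)$ units modulo $q_1^2$ lying in the correct character coset, at least one avoids all $q_1-1$ forbidden classes while keeping $r'$ small enough that the size condition still yields the clean factor in the $\max$. Quantifying ``small enough'' so that the perturbation does not spoil the explicit constant $2.1/\ell$ is the delicate part, and it is precisely the improvement over statement (3) of Theorem~3.1 in \cite{mcgown:ijnt} that the lemma advertises. Everything else — the coprimality and character conditions for the defined $r$, and the final rearrangement into the stated inequality — is routine once this representative is in hand.
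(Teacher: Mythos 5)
Your proposal does not match how the lemma actually works, and it has a fatal flaw at its core. The congruence-dodging step --- replacing $r$ by $r+tq_1^2$ (or by $r$ times a suitable unit mod $q_1^2$) ``without disturbing the character condition'' --- is impossible: $\chi$ is a character modulo $f$, not modulo $q_1^2$, so shifting $r$ by multiples of $q_1^2$ destroys the condition $\chi(r)=\chi(q_2)^{-1}$. Moreover, the $r$ in the lemma's conclusion is the minimal integer subject \emph{only} to $(r,q_1q_2)=1$ and $\chi(r)=\chi(q_2)^{-1}$ (the paper explicitly drops all congruence conditions in this section), so a bound phrased in terms of some perturbed representative $r'$ would not yield the stated inequality. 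You also invoke a GRH-conditional bound on $r$; the lemma is unconditional (GRH enters only later, in Proposition~\ref{proposition:general_GRH}), and the factor $\frac{2.1}{\ell}f^{1/\ell}\log f$ has nothing to do with bounding the least element of a character coset --- it arises from bounding $q_1$.

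The paper's proof applies Heilbronn's Criterion directly to the decomposition $f=uq_2r+q_1v$, where $0<u<q_1$ is chosen so that $uq_2r\equiv f\pmod{q_1}$. Here $\chi(uq_2r)=1$ and $uq_2r\notin\mathcal{N}$ (since $q_2$ divides $uq_2r$ exactly once), and $v\neq 0$ because $f$ is prime; so if $K$ is norm-Euclidean then either $v<0$, which gives $f<q_1q_2r$ (the first branch of the max), or $v>0$ and $q_1v\in\mathcal{N}$, which forces $q_1^{\ell-1}\mid v$ and hence $q_1^{\ell}\leq q_1v<f$, i.e.\ $q_1<f^{1/\ell}$. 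Substituting this into the known bound $f<2.1\,q_1q_2r\log q_1$ from~\cite[Theorem~3.1]{mcgown:ijnt} --- which is where the hypothesis $q_1\neq 2,3,7$ is actually used, not in any residue-counting argument --- yields $f<\frac{2.1}{\ell}f^{1/\ell}(\log f)\,q_2r$, the second branch of the max. This divisibility trick, rather than a refinement of the mod-$q_1^2$ sieve, is the entire content of the lemma, and your proposal is missing it.
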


\begin{proof}
Let $u$ be the integer such that $0<u<q_1$ and $uq_2r \equiv f \pmod{q_1}$.
We set $v = (f-uq_2r)/q_1$, so that
$f = uq_2r+q_1v$.
This equation can be used with Heilbronn's Criterion
provided $v>0$ and $q_1 v\notin\mathcal{N}$.
Clearly $v\neq 0$ lest we contradict the fact that $f$ is prime.
Therefore, as we are assuming $K$ is norm-Euclidean,
it must be the case that $v<0$ or $q_1 v\in\mathcal{N}$.
However, $v<0$ immediately implies that $f<q_1q_2 r$,
and there is nothing more to prove.
Hence it suffices to assume $v>0$.  In this case we must
have $q_1 v\in\mathcal{N}$ which implies $q_1^{\ell-1}$ divides $v$.
Now we see that $v>0$ leads to $q_1^{\ell-1}\leq v<f/q_1$
and hence $q_1 < f^{1/\ell}$. 
As $q_1\neq 2,3,7$, we know from~\cite[Theorem~3.1]{mcgown:ijnt} that
$f<2.1 q_1 q_2 r \log q_1$ and
the result follows.
\end{proof}

\begin{proposition}\label{proposition:general_GRH}
Assume the GRH. If $K$ is norm-Euclidean and $f > \num{e9}$, then
\[
  f \leq \max \left\{ (1.17 \log(f)-6.36)^2, \;\frac{2.1}{\ell} f^{1/\ell} \log(f)  \right\} \cdot \left( 2.5(\ell-1) \log(f)^2 \right)^2.
\]
\end{proposition}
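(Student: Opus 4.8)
The plan is to feed explicit GRH upper bounds for $q_1$, $q_2$, and $r$ into Lemma~\ref{lemma:other_bound}. Concretely, I would establish that under the GRH, for $f > \num{e9}$,
\[
q_1 \le (1.17\log f-6.36)^2,\qquad q_2,\,r \le 2.5(\ell-1)(\log f)^2.
\]
Granting these, and using that $\max\{\,\cdot\,,\,\cdot\,\}$ is monotone in its first argument, Lemma~\ref{lemma:other_bound} (in the case $q_1\neq 2,3,7$) yields at once
\[
f < \max\left\{q_1,\ \tfrac{2.1}{\ell}f^{1/\ell}\log f\right\} q_2 r \le \max\left\{(1.17\log f-6.36)^2,\ \tfrac{2.1}{\ell}f^{1/\ell}\log f\right\}\bigl(2.5(\ell-1)(\log f)^2\bigr)^2,
\]
which is precisely the asserted inequality. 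So the whole content lies in the three conditional bounds.

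All three quantities are instances of a single problem: bounding the least prime (or least positive integer) realizing a prescribed value of $\chi$. Indeed $q_1$ is the least prime $p$ with $\chi(p)\neq 0,1$ (equivalently, the least prime not splitting completely, since $\ell$ prime forces $\chi(p)$ to have order $1$ or $\ell$), $q_2$ is the second such prime, and $r$ is the least positive integer coprime to $q_1q_2$ with $\chi(r)=\chi(q_2)^{-1}$. I would detect such primes by orthogonality: for an $\ell$-th root of unity $\omega$ one has $\mathbf{1}[\chi(n)=\omega]=\tfrac{1}{\ell}\sum_{j=0}^{\ell-1}\overline{\omega}^{\,j}\chi(n)^j$, so
\[
\sum_{p\le x}\mathbf{1}[\chi(p)=\omega]\log p = \frac{1}{\ell}\sum_{j=0}^{\ell-1}\overline{\omega}^{\,j}\sum_{p\le x}\chi(p)^j\log p.
\]
The term $j=0$ supplies a main term of size $\sim x/\ell$ by Chebyshev with explicit constants, while for $j=1,\dots,\ell-1$ the characters $\chi^j$ are nonprincipal of modulus $f$, and the GRH-conditional explicit formula bounds each prime sum by $\sqrt{x}$ times an explicit polynomial in $\log x$ and $\log f$. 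Summing the $\ell-1$ error terms is exactly what produces the $(\ell-1)$ factor; balancing the main term against the aggregated error gives a least-prime bound of the recorded shape $2.5(\ell-1)(\log f)^2$, which controls both $q_2$ (after checking that the count of inert primes up to $x$ exceeds one) and $r$. The bound on $q_1$ needs only the coarser dichotomy $\chi(p)=1$ versus $\chi(p)\neq 1$, hence a single nonprincipal estimate, and this is what yields the sharper constant appearing in $(1.17\log f-6.36)^2$.

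It remains to address the hypothesis $q_1\neq 2,3,7$ of Lemma~\ref{lemma:other_bound}. For $q_1\in\{2,3,7\}$ I would instead invoke the corresponding cases of \cite[Theorem~3.1]{mcgown:ijnt}, which bound $f$ by an absolute constant times $q_2 r$; since $(1.17\log f-6.36)^2\to\infty$, that constant is dominated by the first factor above once $f>\num{e9}$, so the final inequality survives in these cases as well. I expect the main obstacle to be entirely in the explicit constants: verifying that the summed explicit-formula error terms are dominated by the main term uniformly in $\ell$ for every $f>\num{e9}$, and that the numerical coefficients collapse to exactly $1.17$, $6.36$, and $2.5$. This demands careful bookkeeping of the prime-power contributions, of the $\log f$ versus $\log(fx)$ dependence, and of the role of the threshold $f>\num{e9}$ in absorbing lower-order terms and the $q_1\in\{2,3,7\}$ constants. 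By contrast, the structural assembly through Lemma~\ref{lemma:other_bound} is routine.
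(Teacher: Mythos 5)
Your overall skeleton matches the paper's: under GRH one has $q_1\le(1.17\log f-6.36)^2$ (Bach) and $q_2,r\le 2.5(\ell-1)(\log f)^2$ (\cite[Theorems~3.1 and~3.2]{mcgown:jtnb}), and feeding these into Lemma~\ref{lemma:other_bound} gives the stated inequality whenever $q_1\neq 2,3,7$. (The paper simply cites these bounds rather than re-deriving them from the explicit formula, so your second paragraph is extra work, not a gap.)

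The genuine gap is in your last paragraph, on the excluded values $q_1\in\{2,3,7\}$. You assert that \cite[Theorem~3.1]{mcgown:ijnt} always bounds $f$ by an absolute constant times $q_2r$, so that the constant is eventually dominated by $(1.17\log f-6.36)^2$. That works for $q_1=7$ (where one gets $f<21\log 7\cdot q_2r$) and for $q_1=2,3$ with $q_2>5$ (where one gets $f<5q_2r$), and the paper argues exactly as you do there. But for the pairs $(q_1,q_2)=(2,3)$ and $(3,5)$ --- which are the most common cases in practice --- no bound of the shape $f<Cq_2r$ is available from that theorem, and your argument has nothing to fall back on. The paper instead invokes a GRH character-sum estimate, Proposition~5.1 of \cite{mcgown:ijnt}, which in these two cases yields $f\le 72(\ell-1)f^{1/2}\log(4f)+35$ and $f\le 507(\ell-1)f^{1/2}\log(9f)+448$ respectively, hence $f\le 568(\ell-1)f^{1/2}\log f$ and so $f\le(568(\ell-1)\log f)^2$; one then checks numerically (using $f\ge 10^9$, which is where that hypothesis is actually needed) that $(1.17\log f-6.36)^2(2.5)^2\ge 1442$ while $568^2(\ell-1)^2(\log f)^2\le 1442(\ell-1)^2(\log f)^4$, so this bound is absorbed into the claimed one. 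Without some substitute for this step your proof does not cover $(q_1,q_2)=(2,3)$ or $(3,5)$.
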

\begin{proof}
We use the bound on $q_1$ given in~\cite{bach} and the bounds on
$q_2$ and $r$ given in~\cite[Theorems~3.1 and~3.2]{mcgown:jtnb}.
If $q_1 \neq 2,3,7$, Lemma \ref{lemma:other_bound} together with these bounds
gives the result.  This completes the proof in most cases, but it remains to check that we obtain
better bounds in the other special cases.

If $q_1 = 7$, then $f < 21 \log 7 \cdot q_2 r$ by~\cite[Theorem~3.1]{mcgown:ijnt}.
We easily see that $(1.17 \log f -6.36)^2 > 21 \log 7$ for any $f > 60,000$.
The result now follows from~\cite[Theorem~3.2]{mcgown:jtnb}.
If $q_1=2,3$ and $q_2 > 5$, then we obtain $f < 5 q_2 r$
from~\cite[Theorem~3.1]{mcgown:ijnt} and the result follows in the same manner.

Finally, it remains to treat the two special cases: $(q_1,q_2) = (2,3), (3,5)$.
At this point, we assume $f\geq \num{e9}$.
Proposition~5.1 of~\cite{mcgown:ijnt} tells us that $f$ is bounded above by
$72(\ell-1)f^{1/2}\log(4f)+35$ and $507(\ell-1)f^{1/2}\log(9f)+448$
in the first and second case respectively.  In either case, the quantity in question
is bounded above by $568(\ell-1)f^{1/2}\log f$.  Consequently, we have $f\leq (568(\ell-1)\log f)^2$.
Now, one easily checks that  $(1.17\log f-6.36)^2(2.5)^2\geq 1442$ and $568^2(\ell-1)^2(\log f)^2\leq 1442(\ell-1)^2(\log f)^4$, which implies the desired result.
\end{proof}

Invoking the previous proposition immediately yields the GRH conductor bounds
given in Table~\ref{Table:A} when $\ell>3$.
The $\ell=3$ entry of Table~\ref{Table:A} will be obtained in Corollary~\ref{corollary:cubic_GRH}.
This completes the proof of
Theorems~\ref{T:quintic.septic},~\ref{T:19}, and~\ref{T:list}.



\section{The cyclic cubic case revisited}\label{S:cubic}

Unfortunately, the trick employed in the previous section does not help us when $\ell=3$.
Nonetheless, in the cubic case, we are able to slightly weaken the conditions for ``non-norm-Euclideanity''
given in~\cite[Theorem~3.1]{mcgown:ijnt}.  Notice that the following result contains no congruence conditions
and there is no extra $\log q_1$ factor.
The proof again relies on Heilbronn's Criterion (Lemma~\ref{L:heilbronn}),
but we will take advantage of the fact that $\chi$ only takes three different values in this very special case.

\begin{proposition}\label{proposition:bound_on_f}
Let $K$ be a cyclic cubic number field.
If $q_1 \neq 2$ and $f \geq q_1q_2\max(3r,10q_1)$, then $K$ is not norm-Euclidean.
\end{proposition}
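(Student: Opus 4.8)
The plan is to apply Heilbronn's Criterion (Lemma~\ref{L:heilbronn}) by exhibiting a decomposition $f = a + b$ with $a, b > 0$, $\chi(a) = 1$, and both $a, b \notin \mathcal{N}$. Since we are in the cyclic cubic case, $\chi$ takes values in $\{0, 1, \zeta, \zeta^{-1}\}$ where $\zeta = e^{2\pi i/3}$, and the class-number-one criterion tells us that a nonzero integer $n$ lies in $\mathcal{N}$ precisely when $3 \mid v_p(n)$ for every inert prime $p$ (those with $\chi(p) \neq 0, 1$). The strategy mirrors Lemma~\ref{lemma:other_bound}: I want to build $a$ (respectively $b$) so that it is divisible by $q_1$ or $q_2$ to exactly the first power, which forces it out of $\mathcal{N}$ since $q_1, q_2$ are inert.

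First I would set up a candidate $a$ of the form $a = q_2 r k$ for a suitable small multiplier $k$, using the defining property $\chi(r) = \chi(q_2)^{-1}$ so that $\chi(a) = \chi(q_2)\chi(r)\chi(k) = \chi(k)$. Choosing $k$ coprime to $q_1 q_2$ with $\chi(k) = 1$ (for instance $k$ a product of split primes, or exploiting that in the cubic case $\chi(q_2^2) = \chi(q_2)^{-1}$ as well) secures $\chi(a) = 1$. To guarantee $a \notin \mathcal{N}$ it suffices that $v_{q_2}(a) = 1$, which holds as long as $k$ and $r$ are coprime to $q_2$. The real work is in simultaneously arranging $b = f - a$ to be positive, to be divisible by $q_1$ to exactly the first power (hence $b \notin \mathcal{N}$), and to keep $a$ inside the admissible range $a < f$. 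This is where the hypothesis $f \geq q_1 q_2 \max(3r, 10 q_1)$ should enter: it provides enough room to slide $k$ over a range of admissible values while keeping $0 < a < f$, and the factor $10 q_1$ gives slack to dodge the finitely many congruence obstructions modulo $q_1$ (and modulo $q_1^2$, to pin down $v_{q_1}(b) = 1$ rather than $\geq 2$).

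The cleanest route is to fix the $q_1$-adic behavior of $b$ first. I would choose $k$ so that $b = f - q_2 r k \equiv 0 \pmod{q_1}$ but $b \not\equiv 0 \pmod{q_1^2}$; since $(q_2 r, q_1) = 1$ the residue $q_2 r k \bmod q_1^2$ runs over a full progression as $k$ varies, so among the roughly $q_1$ consecutive admissible values of $k$ I can select one hitting the right residue class modulo $q_1^2$ while avoiding $b \equiv 0 \pmod{q_1^2}$. The constant $3$ in $3r$ reflects the freedom to adjust $k$ by small amounts (using the cubic structure, where shifting by $1$ changes $\chi(k)$ through the three cube-root-of-unity values) so that $\chi(k) = 1$ can always be met within the window. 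The condition $q_1 \neq 2$ is needed precisely because for $q_1 = 2$ there is no room to separate $v_{q_1}(b) = 1$ from the parity constraints and the progression modulo $4$ degenerates.

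The main obstacle will be the simultaneous satisfaction of the three constraints on $b$ and $a$: the character condition $\chi(a) = 1$, the valuation conditions $v_{q_2}(a) = 1$ and $v_{q_1}(b) = 1$, and the positivity/range condition $0 < a < f$. Each individually is easy, but showing that a single choice of $k$ meets all of them is a counting argument that requires the window of available $k$ (of length governed by $f/(q_1 q_2 r)$) to exceed the number of excluded residues. The inequality $f \geq q_1 q_2 \max(3r, 10 q_1)$ is engineered so that this window has length at least $\max(3, 10 q_1/r)$, which I expect to be exactly enough: the $10 q_1$ term handles the $q_1^2$-congruence sieve and the $3r$ term handles the three-valued character sieve. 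I anticipate the bookkeeping of these residue-avoidance counts — verifying that no more than the allotted values of $k$ are forbidden — to be the delicate step, and I would organize it as a pigeonhole argument over residues modulo $q_1^2$ combined with the cube-root-of-unity values of $\chi$.
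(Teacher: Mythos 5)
Your setup is the same as the paper's: write $f = k q_2 r + b$ with $k \equiv u \pmod{q_1}$ (where $0<u<q_1$ and $uq_2r\equiv f \pmod{q_1}$), so that $\chi(a)=1$ comes from $\chi(q_2)\chi(r)=1$ and $a\notin\mathcal{N}$ comes from $q_2\,\|\,a$, while $b=q_1v'$ with $q_1\nmid v'$ gives $b\notin\mathcal{N}$. But there is a genuine gap in how you propose to finish: the ``window plus pigeonhole'' argument does not close. Under the hypothesis $f\geq 3q_1q_2r$, the admissible multipliers $k$ with $0<kq_2r<f$ and $k\equiv u\pmod{q_1}$ are essentially just $u$, $u+q_1$, and $u+2q_1$ --- two or three candidates, not a long progression. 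The congruence modulo $q_1^2$ is actually the easy part (once $q_1\mid v$, every $j\geq 1$ with $q_1 \nmid j$ gives $q_1\,\|\,f-(u+jq_1)q_2r$); the real obstruction is that $u+q_1$ or $\tfrac{u}{2}+q_1$ may be an \emph{inert prime} with $\chi$-value $\chi(q_2)$ or $\chi(r)$, which simultaneously ruins $\chi(a)=1$ and can ruin $a\notin\mathcal{N}$ for \emph{both} remaining candidates. No counting over residues rescues this: there are more ways to fail than there are candidates in the window. (Your heuristic that shifting $k$ by $1$ cycles $\chi(k)$ through the three cube roots of unity has no basis.)

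The paper's proof therefore does not sieve; it runs a case analysis on whether $u$ is odd or even and on whether $u+q_1$ and $\tfrac{u}{2}+q_1$ are split, composite, or inert primes with $\chi$-value $\chi(q_2)$ or $\chi(r)$. In the bad cases it abandons the template $a=kq_2r$ entirely and uses structurally different decompositions such as $f=(u+2q_1)(q_2-q_1)r+q_1(\cdots)$ or $f=(r-q_1)q_2(u+2q_1)+q_1(\cdots)$, where the problematic inert prime (or $q_2-q_1$, $r-q_1$) itself serves as the witness that $a\notin\mathcal{N}$; these cases also exploit the minimality of $r$ to get bounds like $r\leq u+q_1<2q_1$ and $q_2<\tfrac32 q_1$. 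Your reading of the two constants is also reversed relative to the actual mechanism: $3r$ comes from needing $v\geq 2q_2r$ (a shift of $u$ by up to $2q_1$) in all cases but one, and $10q_1$ comes from the single exceptional sub-case where $v\geq 4q_2r$ is required but $r\leq 2q_1$ is known --- not from a modulus-$q_1^2$ sieve. Finally, $q_1\neq 2$ is used to guarantee $\chi(2)=1$, which is what makes the factorization $u+q_1=2\cdot\tfrac{u+q_1}{2}$ useful when $u$ is odd. As written, your proposal would need the entire case analysis supplied before it constitutes a proof.
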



\begin{proof}
It will be crucial that
$\ell=3$, which of course implies that
$\chi$ only takes three values:
$1$, $\chi(q_2)$, and $\chi(r)$. In addition, we have
$\chi(r)= \chi(q_2)^{-1}=\chi(q_2)^2$.

Let $u$ be the integer such that $0<u<q_1$ and $uq_2r \equiv f \pmod {q_1}$.
We set $v = (f-uq_2r)/q_1$, so that
\begin{equation}\label{eq:bound_on_f:1}
f = uq_2r+q_1v.
\end{equation}
Observe that $f\geq q_1 q_2 r$ implies $v\geq 0$; moreover,
we may assume $v\neq 0$ lest we contradict the fact that
$f$ is a prime.
If $q_1$ does not divide $v$, then we may apply Heilbronn's Criterion with 
\eqref{eq:bound_on_f:1}.
Hence we may assume that $q_1$ divides $v$.
We break the proof into a number of cases.

\begin{enumerate}
\item
Suppose $u$ is odd.  Then $u+q_1$ is even and $0 < (u+q_1)/2 < q_1$,
so every prime divisor $p$ of $(u+q_1)/2$ is such that $\chi(p)=1$.
Besides, $(q_1,q_2r)=1$ and $q_1$ divides $v$, so $q_1$ does not
divide $v-q_2r$.
As $q_1 > 2$ and $\chi(2) = 1$, we may apply Heilbronn's Criterion
with 
\begin{equation}\label{eq:bound_on_f:2}
\phantom{a}
\end{equation}

\vspace{-3em}

\noindent\begin{minipage}[t]{\linewidth}
\[
    f = (u+q_1)q_2r + q_1(v-q_2r),
\]
provided $v \geqslant q_2r$.
\end{minipage}
\item
Suppose $u$ is even.  We distinguish cases according to the value of $u+q_1$.
  \begin{enumerate}
  \item
  If $u+q_1$ is composite, then any prime factor $p$ of $u+q_1$
    is such that $p<q_1$. Therefore, we may again apply Heilbronn's Criterion
    with \eqref{eq:bound_on_f:2}, provided $v \geqslant q_2r$.
  \item
  If $u+q_1$ is prime and $\chi(u+q_1)=1$, then we proceed similarly.
  \item
  Suppose $u+q_1$ is prime and $\chi(u+q_1) = \chi(r)$.
  Notice that
  $r \leq u+q_1$.
  \begin{enumerate}
    \item If $\frac{u}{2} + q_1$ is composite or a prime such 
    $\chi(\frac{u}{2}+q_1) = 1$, then 
    $(q_2,\frac{u}{2}+q_1)=1$ and we may use 
    Heilbronn's Criterion with
    \begin{equation}\label{eq:bound_on_f:4}
       \phantom{a}
    \end{equation}
    
    \vspace{-3em}

    \begin{minipage}[t]{\linewidth}
    \[
        f = (u+2q_1)q_2r + q_1(v-2q_2r),
    \]
    provided $v \geq 2q_2r$.
    \end{minipage}
    \item \label{item:subcase_c_ii}
    If $\frac{u}{2}+q_1$ is prime and $\chi(\frac{u}{2}+q_1) = \chi(q_2)$,
    then we have $q_2 \leq \frac{u}{2}+q_1 < \frac{3}{2}q_1$
    which also implies
    $0 \leq u + 2(q_1-q_2) < q_1$.
    If $u \neq 2(q_2-q_1)$, then 
    $q_1$ does not divide $v+r(u+2(q_1-q_2))$ and
    therefore we can apply Heilbronn's Criterion
    with \\
    \noindent\begin{minipage}[t]{\linewidth}
    \vspace*{-1em}
    \[
       f = (u+2q_1)(q_2-q_1)r + q_1(v+r(u+2(q_1-q_2))),
    \]
    provided $v \geq 2q_2r$. Indeed, if $(u+2q_1)(q_2-q_1)r \in \mathcal{N}$,
    then the valuation of $(u+2q_1)(q_2-q_1)r$ at $\frac{u}{2}+q_1$ is at least
    $\ell =3$, so
    $(\frac{u}{2}+q_1)^2$ divides $r \leq u+q_1$.
    Then $q_1^2 < (\frac{u}{2}+q_1)^2 \leq r < 2q_1$, which is impossible.

    If $u = 2(q_2-q_1)$, then $4$ divides $u$ and
    $q_1 < \frac{u}{4} + q_1 < q_2$. Therefore, $(q_2,(u+4q_1)r)=1$ and
    $(u+4q_1)q_2r \notin \mathcal{N}$.
    So we may apply Heilbronn's Criterion
    with \\
    \noindent\begin{minipage}[t]{\linewidth}
    \vspace*{-1em}
    \[
      f= (u+4q_1)q_2r + q_1(v-4q_2r),
    \]
    provided $v \geq 4q_2r$. Notice in this case
    that $q_2 \leq \frac{3}{2} q_1$ and
    $r \leq u+q_1 = 2q_2-q_1 \leq 2q_1$.
    \end{minipage}
    \end{minipage}
    \item
    If $\frac{u}{2}+q_1$ is prime and $\chi(\frac{u}{2}+q_1) = \chi(r)$,
    then $q_2 < r \leq \frac{u}{2} + q_1$. Therefore, $(q_2,u+2q_1)=1$.
    Besides, $r-q_1 < q_1$, so $(q_2,r-q_1)=1$. As a result,
    $(u+2q_1)q_2 (r-q_1) \notin \mathcal{N}$.
    If $r \neq \frac{u}{2}+q_1$, then $q_1$ does not divide
    $q_2(u+2q_1-2r)$, so we can apply Heilbronn's Criterion with \\
    \noindent\begin{minipage}[t]{\linewidth}
    \vspace*{-1em}
    \[
      f= (r-q_1)q_2(u+2q_1) + q_1(v+q_2(u+2q_1-2r)),
    \]
    assuming $v \geq 2q_2r$.

    If $r=\frac{u}{2} + q_1$, then $u+q_1-r = \frac{u}{2}$.
    As $u+q_1$ is prime and satisfies
    $\chi(u+q_1) =\chi(r)$,
    we have $(u+q_1,(r-q_1)q_2)=1$ and
    $(r-q_1)q_2(u+q_1) \notin \mathcal{N}$.
    So we may use
    Heilbronn's Criterion with \\
    \noindent\begin{minipage}[t]{\linewidth}
    \vspace*{-1em}
    \[
       f = (r-q_1)q_2(u+q_1) + q_1(v+q_2 (u+q_1-r)),
    \]
    assuming $v \geq 0$.
    \end{minipage}
    \end{minipage}
  \end{enumerate}
  \item
  Suppose $u+q_1$ is prime and $\chi(u+q_1) = \chi(q_2)$.
  
  \begin{enumerate}
      \item If $\frac{u}{2}+q_1$ is composite or a prime such
        that $\chi(\frac{u}{2}+q_1)=1$, then we may use
        Heilbronn's Criterion with \eqref{eq:bound_on_f:4}.
      \item If $\frac{u}{2}+q_1$ is prime and
        $\chi(\frac{u}{2}+q_1)=\chi(q_2)$, then
        $q_2 \leq \frac{u}{2}+q_1 < u+q_1$, so
        $(q_1q_2,u+q_1)=1$, and by definition of
        $r$, $r \leq (u+q_1)^2$.
        
        If $r<(u+q_1)^2$, then $(u+q_1)(q_2-q_1)r \notin
        \mathcal{N}$. Indeed, $q_2 \leq \frac{u}{2}+q_1$,
        so $q_2 - q_1 < q_1$. Besides, $(u+q_1)^2$
        cannot divide $r<(u+q_1)^2$. Consequently,
        we may apply Heilbronn's Criterion with
        \begin{equation}\label{eq:bound_on_f:5}
            \phantom{a}
        \end{equation}
    
        \vspace{-3em}

        \noindent\begin{minipage}[t]{\linewidth}
        \[
            f = (u+q_1)(q_2-q_1)r+q_1(v+r(u+q_1-q_2)),
        \]
        assuming $v \geq q_2 r$. Indeed,
        $q_2 \leq u+q_1 < 2q_1$, so $0  \leq u + q_1-q_2
        < q_1$, and $u=q_2 -q_1$ is impossible, 
        because it would imply $q_1 < \frac{u}{2} + q_1 < q_2$,
        which contradicts $\chi(\frac{u}{2}+q_1) \neq 1$.
   
           If $r=(u+q_1)^2$, then $(\frac{u}{2}+q_1)^2 < r$;
           in this case, it follows from the definition of $r$ that
          $(\frac{u}{2}+q_1,q_2) \neq 1$ and we obtain
          $\frac{u}{2} + q_1 = q_2$. We may now apply     
        Heilbronn's Criterion with \\
        \noindent\begin{minipage}[t]{\linewidth}
        \vspace*{-1em}
        \[
          f=2uq_2^2(u+q_1)+q_1(v-uq_2(u+q_1)),
        \]
        assuming $v \geq u q_2(u+q_1)$ (which
        holds if $v \geq q_2(u+q_1)^2 = q_2 r$).
        \end{minipage}
        \end{minipage}
      \item If $\frac{u}{2}+q_1$ is prime
      and $\chi(\frac{u}{2}+q_1)=\chi(r)$, then 
      $q_2 < r \leq \frac{u}{2}+q_1$.
      Therefore, $(u+q_1,(q_2-q_1)r)=1$ and
      $(u+q_1)(q_2-q_1)r \notin \mathcal{N}$. 
      Besides, $0 < q_2-q_1 < u+q_1 -q_2 < 2q_1 -q_2 < q_1$,
      and we may
      apply Heilbronn's Criterion with \eqref{eq:bound_on_f:5},
      assuming $v\geq 0$.
  \end{enumerate}
  \end{enumerate}
\end{enumerate}
Now we summarize.  In all cases but one, the assumption $v\geq 2 q_2 r$ is sufficient
and hence it is enough to require that $f\geq 3 q_1 q_2r$.
(Recall that $v=(f- u q_2 r)/q_1$.)
In the exceptional case, we have shown that
$v\geq 4 q_2 r$ is sufficient; but in that situation we also know $r\leq 2q_1$ and
therefore it is enough to require that $f\geq 10 q_1^2 q_2$.
\end{proof}

\begin{corollary}\label{corollary:cubic_GRH}
Assume the GRH.  Let $K$ be a cyclic cubic field.
If $K$ is norm-Euclidean, then $f < \num{4e10}$.
\end{corollary}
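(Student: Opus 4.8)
The plan is to deduce the corollary from the contrapositive of Proposition~\ref{proposition:bound_on_f}, inserting the GRH-conditional bounds on $q_1$, $q_2$, and $r$ exactly as in the proof of Proposition~\ref{proposition:general_GRH}, but now exploiting the cleaner cubic estimate, which carries no $\log q_1$ factor and no congruence hypotheses. First I would dispose of small conductors: if $f \leq \num{e9}$ there is nothing to prove, so I assume $f > \num{e9}$, the range in which the analytic bounds are valid, and then split on the value of $q_1$.

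The main case is $q_1 \neq 2$. Here Proposition~\ref{proposition:bound_on_f} applies directly, so if $K$ is norm-Euclidean we must have $f < q_1 q_2 \max(3r, 10 q_1)$. I would then substitute Bach's GRH bound $q_1 < (1.17\log f - 6.36)^2$ from \cite{bach} together with the GRH bounds $q_2, r < 5(\log f)^2$, the specialization to $\ell = 3$ of the quantity $2.5(\ell-1)(\log f)^2$ from \cite[Theorems~3.1 and~3.2]{mcgown:jtnb}. This produces a self-referential inequality $f < g(f)$ whose right-hand side grows only like $(\log f)^6$. Since $g(f)/f \to 0$, the inequality can hold only for $f$ below the point where $f$ and $g(f)$ cross; a direct numerical check at $f = \num{4e10}$, where $g(f)$ is already smaller than $f$, shows this crossover lies well under $\num{4e10}$, and the eventual monotonicity of $f - g(f)$ rules out every $f \geq \num{4e10}$.

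The remaining, and genuinely delicate, case is $q_1 = 2$, which Proposition~\ref{proposition:bound_on_f} explicitly excludes; indeed its proof repeatedly uses $\chi(2) = 1$, which fails precisely when $2$ is inert. Here I would fall back on the cubic specialization of the earlier machinery: for $q_1 = 2$ with $q_2$ not too small one has $f < 5 q_2 r$ from \cite[Theorem~3.1]{mcgown:ijnt}, which combined with $q_2, r < 5(\log f)^2$ gives $f < 125(\log f)^4$, an even stronger conclusion. The finitely many small configurations, in particular $(q_1, q_2) = (2,3)$, would be handled by Proposition~5.1 of \cite{mcgown:ijnt}, exactly as the special cases were treated in the proof of Proposition~\ref{proposition:general_GRH}. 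I expect this $q_1 = 2$ branch to be the main obstacle, since it is the one point where the clean cubic estimate is unavailable and one must verify that the older bounds still comfortably beat $\num{4e10}$; the only other sensitive step is the routine but careful numerical solution of the transcendental inequality $f < g(f)$.
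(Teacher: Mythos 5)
Your proposal is correct and follows essentially the same route as the paper, whose entire proof reads ``We use Proposition~\ref{proposition:bound_on_f} and the bounds on $q_1$, $q_2$ and $r$ given in \cite{bach, mcgown:jtnb}.'' You additionally make explicit the $q_1=2$ branch that Proposition~\ref{proposition:bound_on_f} excludes (handling it via \cite[Theorem~3.1]{mcgown:ijnt} and the $(2,3)$ special case, exactly as in the proof of Proposition~\ref{proposition:general_GRH}), a detail the paper's one-line proof leaves implicit; your numerics at $f=\num{4e10}$ check out in both branches.
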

\begin{proof}
We use Proposition \ref{proposition:bound_on_f} and the bounds on $q_1$, $q_2$ 
and $r$ given in \cite{bach, mcgown:jtnb}.
\end{proof}

Although the previous corollary is already known, we want to point out that Proposition~\ref{proposition:bound_on_f} allows one to prove
Theorem~\ref{T:cubic.GRH} using less computation than is employed in~\cite{mcgown:jtnb}.
More importantly, Proposition~\ref{proposition:bound_on_f} will serve as one of the main ingredients in lowering the unconditional conductor bound
(in the cubic case).


\section{Character non-residues}\label{S:nonresidues}

Let $\chi$ be a non-principal Dirichlet character modulo a prime $p$.
Suppose that $q_1<q_2$ are the two smallest prime non-residues of $\chi$.
This section is devoted to improving the constants appearing in~\cite{mcgown:jnt}.
We begin by quoting a result proved by Trevi\~no:

\begin{proposition}[{\cite[Theorem~1.2]{trevino:jnt}}]\label{P:trevino}
Suppose $p>3$.  Then $q_1<0.9\,p^{1/4}\log p$
unless $\chi$ is quadratic and $p\equiv 3\pmod{4}$,
in which case
$q_1<1.1\,p^{1/4}\log p$.
\end{proposition}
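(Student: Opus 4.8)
Since this proposition is quoted from \cite[Theorem~1.2]{trevino:jnt}, in the paper itself it is simply cited; the following is how I would reconstruct its proof. The plan is to convert a lower bound on $q_1$ into an abnormally large character sum and contradict an explicit upper bound. Suppose $q_1>z$ for a threshold $z$ to be chosen. Because every prime below $q_1$ is a residue (satisfies $\chi(q)=1$), multiplicativity forces $\chi(n)=1$ for every $z$-smooth integer $n$ coprime to $p$; since $z$ will be of size roughly $p^{1/4}$, all such $n$ up to any $x<p$ are automatically coprime to $p$. Writing $\Psi(x,z)$ for the number of $z$-smooth integers up to $x$, the partial sum $S(x)=\sum_{n\le x}\chi(n)$ satisfies $\operatorname{Re}S(x)\ge 2\Psi(x,z)-x$, because the $z$-smooth terms each contribute $+1$ while the remaining at most $x-\Psi(x,z)$ terms contribute at least $-1$ apiece. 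If more than half of the integers up to $x$ are $z$-smooth, this lower bound is a positive multiple of $x$.

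The upper bound for $|S(x)|$ is where the two cases of the statement originate. The exponent $1/4$ (rather than the $1/2$ coming from Pólya--Vinogradov alone) requires the explicit Burgess inequality; I would use it with a fixed small parameter $r$, bounding $|S(x)|$ by a constant times $x^{1-1/r}\,p^{(r+1)/(4r^2)}$ up to logarithmic factors. The parity of $\chi$ enters through the explicit constants in these character-sum inequalities: a quadratic character modulo $p$ is odd exactly when $p\equiv 3\pmod 4$, and odd characters carry the larger constant. I would therefore carry the odd-quadratic case and all remaining cases in parallel, which is precisely what produces the bound $1.1\,p^{1/4}\log p$ in the exceptional case and $0.9\,p^{1/4}\log p$ otherwise.

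To balance the two estimates I would set $x=z^{u}$ with $u<\sqrt e$, so that an explicit lower bound for $\Psi(x,z)$ in terms of the Dickman function $\rho$ gives $\Psi(x,z)\ge(\rho(u)-o(1))x$ with $2\rho(u)-1>0$, whence $\operatorname{Re}S(x)\ge(2\rho(u)-1)x$. Taking $u=1+1/r$ makes the Burgess exponent collapse to $p^{1/4}$: with $r=2$ one has $u=3/2<\sqrt e$ and $\rho(3/2)=1-\log(3/2)>1/2$, and comparing $(2\rho(3/2)-1)x$ with the Burgess bound forces $z\ll p^{1/4}\log p$. Optimizing the numerical constants in $r$, $u$, the Burgess bound, and the Dickman approximation is what pins the leading constants down to $0.9$ and $1.1$.

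The main obstacle is the explicit and uniform bookkeeping. Every constant must be tracked through the Burgess inequality, the smooth-number count, and the Dickman-function approximation, and the resulting inequality only yields a contradiction for $p$ above an explicit threshold $p_0$. Thus the analytic argument must be complemented by a direct computation of the least prime non-residue for all $p\le p_0$, and the value of $p_0$ (hence the size of that computation) is governed by how sharply one wants the constants $0.9$ and $1.1$; pushing them to these values while keeping $p_0$ within reach of computation is the delicate part.
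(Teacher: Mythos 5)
The paper gives no proof of this proposition: it is imported verbatim from Trevi\~no \cite[Theorem~1.2]{trevino:jnt} and used as a black box, so there is nothing internal to compare against. Your reconstruction correctly identifies the argument actually used in the cited source --- assume all primes up to $z$ are residues, deduce $\operatorname{Re}\sum_{n\le x}\chi(n)\ge 2\Psi(x,z)-x$ via multiplicativity on $z$-smooth integers, take $x=z^{u}$ with $u=1+1/r<\sqrt{e}$ so that the explicit Burgess bound collapses to the exponent $1/4$ and $2\rho(u)-1>0$, and trace the even/odd character distinction through the explicit constants to get the two cases, finishing with a finite computation below the resulting threshold --- so as a sketch of the external proof it is essentially the same approach.
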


The following proposition will lead to improved bounds on $q_2$ and the product
$q_1 q_2$:

\begin{proposition}\label{P:q_2}
Suppose $p\geq \num{e6}$, and that $u$ is a prime with $u\geq A\log p$
where
$A=(2/5)e^{3/2}\approx 1.79$.
Suppose
$\chi(n)=1$ for all $n\in[1,H]$ with $(n,u)=1$.  Then
\[
  H\leq g(p)\,p^{1/4}\log p
  \,,
\]
where $g(p)$ is an explicitly given function.  Moreover,
$g(p)$ is decreasing for $p\geq 10^6$ and $g(p)\to 2.71512...$.
\end{proposition}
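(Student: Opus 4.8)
The plan is to run the Burgess-type argument of~\cite{mcgown:jnt}, now fed with the sharp explicit character-sum estimates of Trevi\~no, and to optimize the free parameters carefully enough to extract an explicit prefactor $g(p)$ with controlled asymptotics. The starting observation is that the hypothesis is equivalent to the assertion that every prime $\leq H$ other than $u$ is a residue of $\chi$; since $\chi$ is completely multiplicative, it follows that $\chi(n)=1$ for every $H$-smooth integer $n$ that is coprime to $u$. Thus the residue condition is converted into a statement that a character sum over an interval contains many terms equal to $1$, which one then plays against the upper bound furnished by Burgess.

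First I would fix a length $x$ and study $S(x)=\sum_{n\leq x,\,(n,u)=1}\chi(n)$. Writing $\psi_u(x,H)$ for the number of $H$-smooth $n\leq x$ with $(n,u)=1$, each such $n$ contributes $+1$, while every remaining term contributes at least $-1$ to the real part, so $\Re S(x)\geq 2\psi_u(x,H)-\#\{n\leq x:(n,u)=1\}$. On the other hand $S(x)=\sum_{n\leq x}\chi(n)-\chi(u)\sum_{m\leq x/u}\chi(m)$, and both sums on the right are controlled by Trevi\~no's explicit Burgess inequality; call the resulting bound $B(x)$. Combining the two estimates yields an inequality of the shape $2\psi_u(x,H)-\#\{n\leq x:(n,u)=1\}\leq 2B(x)$. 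To make the left side explicit I would use, for $u\leq H$, the clean identity $\psi_u(x,H)=\psi(x,H)-\psi(x/u,H)$, together with the elementary evaluation $\psi(x,H)=x(1-\log s)+(\text{explicit error})$ valid for $x=H^s$ with $1\leq s\leq 2$ (so that $\rho(s)=1-\log s$ appears directly via Mertens-type bounds, with no appeal to Dickman asymptotics); this is precisely the range in which the optimization lands. After substitution the inequality becomes $(2\rho(s)-1)(1-1/u)\,x\lesssim 2B(H^s)$.

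Next I would feed in Burgess with parameter $r$ and choose $s=(r+1)/r$, which forces the power of $p$ to be exactly $p^{1/4}$; the surviving logarithmic factor of the Burgess bound becomes $(\log p)^{r/(r+1)}$, and this is what produces the single factor $\log p$ in the limit. Solving for $H$ and using the hypothesis $u\geq A\log p$ to bound $(1-1/u)^{-1}$ then gives $H\leq g(p)\,p^{1/4}\log p$ with an explicit $g(p)$. The constant $A=(2/5)e^{3/2}$ should emerge precisely as the threshold at which the coprimality loss $(1-1/u)^{-1}$ and the $\psi(x/u,H)$ correction can both be absorbed without degrading the leading term; tracking the largest $u$-dependent contributions through the inequality and demanding they not dominate is what pins down the numerical value of $A$.

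The hard part, and the step I expect to consume the most care, is the final explicit analysis of $g(p)$. One must retain honest error terms throughout—both in the elementary smooth-number count and in Trevi\~no's Burgess constant $C(r)$—optimize over the integer parameter $r$ as a function of $p$, and then prove that the resulting function is monotonically decreasing for $p\geq\num{e6}$ and that $\lim_{p\to\infty}g(p)=2.71512\ldots$. The monotonicity is a calculus estimate on an explicit but unwieldy expression, while the limit demands a delicate asymptotic analysis as $r\to\infty$ and $s\to 1$, in which the competing factors $(2\rho(s)-1)^{-1}$, the Burgess constant, and the absorbed powers of $\log p$ must all be expanded to leading order and shown to combine into the stated constant. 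Everything before this last step is a matter of assembling known explicit inequalities; it is the simultaneous control of all these pieces to the accuracy needed for the sharp limiting constant that constitutes the real obstacle.
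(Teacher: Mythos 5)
Your route --- Vinogradov's smooth-number amplification combined with the \emph{single-sum} Burgess inequality $\bigl|\sum_{n\le x}\chi(n)\bigr|\le c(r)\,x^{1-1/r}p^{(r+1)/(4r^2)}(\log p)^{1/r}$ --- is genuinely different from the paper's, and it cannot deliver the constant that the proposition asserts. Track the prefactor: from $(2\rho(s)-1)(1-1/u)\,x\le 2B(x)$ you get $x\le D^{r}\,p^{(r+1)/(4r)}\log p$ with $D=2c(r)/\bigl((2\rho(s)-1)(1-1/u)\bigr)$, hence with your choice $s=(r+1)/r$,
\[
H=x^{1/s}\le D^{\frac{r^2}{r+1}}\,p^{1/4}(\log p)^{\frac{r}{r+1}}.
\]
Since $2\rho(s)-1=1-2\log\frac{r+1}{r}$ equals $0.189\ldots$ at $r=2$ and the explicit Burgess constant satisfies $c(r)\ge 1$, one has $D^{r^2/(r+1)}\ge(2/0.19)^{4/3}>23$ already at $r=2$, and the exponent $r^2/(r+1)$ makes this quantity grow for larger $r$ (it diverges as $r\to\infty$, which is exactly why you cannot ``take $r\to\infty$'' to clean up the limiting constant). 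So in the range $10^6\le p\le 10^{52}$ that actually matters here, your scheme produces a prefactor in the tens, not $2.7$; the saving of $(\log p)^{r/(r+1)}$ over $\log p$ only compensates when $\log p$ is astronomically large. This is not cosmetic: Proposition~\ref{P:A1} (Table~\ref{Table:A1}) and Corollaries~\ref{C:A2}--\ref{C:A3} consume the specific values $g(p_0)\approx 2.8$--$3$, so a bound of the form $H\le 25\,p^{1/4}\log p$ does not prove the stated result. Your heuristic that $A=(2/5)e^{3/2}$ arises as the threshold for absorbing the coprimality loss $(1-1/u)^{-1}$ is also not where that constant comes from.

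The paper instead runs the \emph{double-sum} Burgess method with both parameters growing logarithmically: $h=\lfloor A\log p\rfloor$ is the length of the shifted intervals and $r=\lceil B\log p\rceil$ with $B=1/5$ is the moment. The hypothesis is converted, via \cite[Proposition~1]{mcgown:jnt}, into a lower bound of the shape $\frac{6}{\pi^2}(1-u^{-1})h\,(H/2h)^2\hat f(p)\,(h-2)^{2r}$ for the Burgess sum $\sum_x\bigl|\sum_{m\le h}\chi(x+m)\bigr|^{2r}$, which is then played against Trevi\~no's explicit upper bound \cite[Theorem~1.1]{trevino:jnt}. The relation $A=2Be^{3/2}$ is precisely what forces $\bigl(2r/(e(h+1))\bigr)^r\le p^{-1/2}$, so that the term $(2r-1)p^{1/2}h^{2r}$ dominates, and the limit $g(p)\to 2\pi e^{2B/A}\sqrt{A/15}=2\pi e^{e^{-3/2}}\sqrt{(2/5)e^{3/2}/15}=2.71512\ldots$ is then an optimization over $B$. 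If you want to salvage a smooth-number argument you would have to rebuild it on the double-sum inequality rather than the single-sum one; as written, your proof has a quantitative gap that no amount of care in the ``final explicit analysis'' can close.
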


\begin{proof}
Similar to the proof of~\cite[Theorem~3]{mcgown:jnt}, we may reduce to the case where
$H\leq (A\log p-1)^{1/2}p^{1/2}$.
We may assume $H\geq Kp^{1/4}\log p$ where $K:=2.7151$, or else there is nothing to prove.
We set $h=\lfloor A\log p\rfloor$, $r=\lceil B\log p\rceil$,
with $A=(2/5)e^{3/2}$ and $B=1/5$.

For $1/2 \leq y \leq 1$, we have $\exp(y/2B) \leq \exp(1/2B) y$, so in particular
\begin{equation*}
p^{1/2r} = \exp \left( \frac1{2B} \frac{B \log(p)}{r}\right)
\leq \exp \left( \frac{1}{2B} \right)  \frac{B \log(p)}r.
\end{equation*}
But $B\exp(1/2B) \log(p) = eA/2 \cdot \log(p) \leq e(h+1)/2$, from which we deduce
\begin{equation}\label{E:chooseAB}
  \left( \frac{2r}{e(h+1)}\right)^r \leq p^{-1/2}.
\end{equation}

One verifies that $Kp^{1/4}> 32 A$ for $p\geq \num{e6}$ and hence $H> 32h$.
We set $X:=H/(2h)$ and observe that we have the a priori lower bound
\[
  X=
  \frac{H}{2h}
  \geq
  \frac{Kp^{1/4}\log p}{2A\log p}
  =
  \frac{Kp^{1/4}}{2A}
  \,,
\]
and, in particular, $X>16$ from the previous sentence.
We will make use of the function $f(X,u)$ defined by
  \[
  f(X,u)=1-\frac{\pi^2}{3}
  \left(
  \frac{1}{2X^2}+\frac{1}{2X}+\frac{1}{1-u^{-1}}\cdot\frac{1+\log X}{X}
  \right)
  \,;
\]
observe that
\[
 \hat{f}(p):=f\left(\frac{Kp^{1/4}}{2A},\,A\log p\right)\leq f(X,u)
  \,.
\]
Combining~\cite[Proposition~1]{mcgown:jnt}, \cite[Theorem~1.1]{trevino:jnt}, and
an explicit version of Stirling's Formula (see~\cite{spira:1971}, for example), we obtain
\begin{equation}
  \label{E:inequality1}
  \frac{6}{\pi^2}\left(1-u^{-1}\right)h(h-2)^{2r}\left(\frac{H}{2h}\right)^2\hat{f}(p)
  \;\leq\;
  \sqrt{2}\left(\frac{2r}{e}\right)^r p h^r + (2r-1)p^{1/2}h^{2r}.
\end{equation}
Using the convexity of the logarithm, we establish
the following estimates:
\begin{equation}
  \label{E:estimates}
  \left(\frac{h}{h-2}\right)^r\leq F(p)
  \,,\quad
  \left(\frac{h+1}{h}\right)^r\leq G(p)
\end{equation}
\[
  F(p):=\exp\left(\frac{2B\log p +2}{A\log p -3}\right)
  \,,\quad
  G(p):=\exp\left(\frac{B\log p+1}{A\log p-1}\right)
\]
Note that $F(p)$ and $G(p)$ are both decreasing functions of $p$.
Rearranging (\ref{E:inequality1}) and applying (\ref{E:estimates}), (\ref{E:chooseAB}) gives:
\begin{align}
&
\label{E:explain1}
\frac{6}{\pi^2}\left(1-u^{-1}\right)H^2\hat{f}(p)
\\
\notag
& \qquad
\leq
4h(2r-1)\left(\frac{h}{h-2}\right)^{2r}p^{1/2}
\left[
1+\frac{\sqrt{2}}{2r-1}\left(\frac{2r}{eh}\right)^rp^{1/2}
\right]
\\
\notag
& \qquad
\leq
4h(2r-1)F(p)^2
p^{1/2}
\left[
1+
\frac{\sqrt{2} G(p)}{2r-1}\left(\frac{2r}{e(h+1)}\right)^r p^{1/2}
\right]
\\
\notag
& \qquad
\leq
4(A\log p)(2B\log p+1)F(p)^2
p^{1/2}
\left(
1+
\frac{\sqrt{2}G(p)}{2r-1}
\right)
\\
\notag
&\qquad
\leq
8 ABp^{1/2}(\log p)^2 F(p)^2 
\left(
1+\frac{1}{2B\log p}
\right)
\left(
1+
\frac{\sqrt{2}G(p)}{2B\log p-1}
\right)
\\
&\qquad
\leq 
8AB\,p^{1/2}(\log p)^2
F(p)^2
\left(
1+\frac{5}{2\log p}
\right)
\left(
1+
\frac{5\sqrt{2}G(p)}{2\log p-5}
\right)
\end{align}
Now the result follows provided we define:
\[
g(p):=
2\pi
F(p)
\sqrt{
\frac{
A
\left(
1+\frac{5}{2\log p}
\right)
\left(
1+
\frac{5\sqrt{2}G(p)}{2\log p-5}
\right)
}
{
15
\hat{f}(p)
\left(
1-\frac{1}{A\log p}
\right)
}
}
\,.\hspace{3pt}\qedhere
\]
\end{proof}

\begin{proposition}\label{P:A1}
  Fix a real constant $p_0\geq \num{e6}$.
  There exists an explicit constant $C$ (see Table~\ref{Table:A1})
  such that if $p\geq p_0$ and $u$ is a prime with $u\geq 1.8\log p$,
  then there exists $n\in\Z^+$ with $(n,u)=1$, $\chi(n)\neq 1$, and
  $n<C\,p^{1/4}\log p$.

\begin{table}[t]
\centering
\begin{tabular}{lllllllll}
\toprule
$p_0$ & $\num{e6}$ & $\num{e8}$ & $\num{e10}$ & $\num{e12}$ & $\num{e14}$ & $\num{e16}$& $\num{e18}$& $\num{e20}$\\ \cmidrule{2-9}
$C$ & $6.9236$ & $4.1883$ & $3.5764$ & $3.3290$ & $3.2019$ & $3.1246$ & $3.0716$ & $3.0320$ \\
\midrule
$p_0$ & $\num{e22}$ & $\num{e24}$ & $\num{e26}$ & $\num{e28}$ & $\num{e30}$ & $\num{e32}$& $\num{e34}$& $\num{e36}$\\ \cmidrule{2-9}
$C$ & $3.0008$ & $2.9754$ & $2.9542$ & $2.9363$ & $2.9208$ & $2.9074$ & $2.8956$ & $2.8852$\\
\midrule
$p_0$ & $\num{e38}$ & $\num{e40}$ & $\num{e42}$ & $\num{e44}$ & $\num{e46}$ & $\num{e48}$& $\num{e50}$& $\num{e52}$\\ \cmidrule{2-9}
$C$ &  $2.8759$ & $2.8676$ & $2.8601$ & $2.8533$ & $2.8471$ & $2.8415$ & $2.8363$ & $2.8315$\\
\bottomrule
\end{tabular}
\caption{Values of $C$ for various choices of $p_0$\label{Table:A1}}
\end{table}
\end{proposition}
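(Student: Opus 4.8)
The plan is to obtain Proposition~\ref{P:A1} as, in essence, the contrapositive of Proposition~\ref{P:q_2}. Suppose, for contradiction, that no admissible $n$ exists; that is, suppose $\chi(n)=1$ for every $n\in\Z^+$ with $(n,u)=1$ and $n<C\,p^{1/4}\log p$. Note first that for $p\geq p_0\geq\num{e6}$ the quantity $C\,p^{1/4}\log p$ is comfortably smaller than $p$ (with $C\approx 7$ and $p^{1/4}\log p\approx 436$ at $p=\num{e6}$, the product is a few thousand), so each such $n$ is coprime to $p$ and hence $\chi(n)\neq 0$; the contradiction hypothesis is therefore genuinely that $\chi(n)=1$, not merely $\chi(n)\neq\chi$ of a non-residue.

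Next I would check that the hypotheses of Proposition~\ref{P:q_2} are satisfied. Since $A=(2/5)e^{3/2}=1.7927\ldots<1.8$, the assumption $u\geq 1.8\log p$ gives $u\geq A\log p$, and $p\geq p_0\geq\num{e6}$ provides the remaining hypothesis. Setting $H$ to be the largest integer strictly less than $C\,p^{1/4}\log p$, the contradiction hypothesis says precisely that $\chi(n)=1$ for all $n\in[1,H]$ with $(n,u)=1$, so Proposition~\ref{P:q_2} yields
\[
  H\leq g(p)\,p^{1/4}\log p.
\]
Because $g$ is decreasing on $[\num{e6},\infty)$ and $p\geq p_0$, we have $g(p)\leq g(p_0)$, whence $H\leq g(p_0)\,p^{1/4}\log p$.

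On the other hand, $H>C\,p^{1/4}\log p-1$ by construction. I would then select $C$ to be a rational upper bound for $g(p_0)$ chosen large enough that $(C-g(p_0))\,p^{1/4}\log p>1$ for all $p\geq p_0$; since $p^{1/4}\log p$ is increasing and already exceeds $400$ at $p=\num{e6}$, this holds as soon as $C>g(p_0)$ by any fixed positive margin. With such a $C$ we obtain $H>g(p_0)\,p^{1/4}\log p\geq H$, a contradiction, forcing the existence of the desired $n$.

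The remaining task is purely computational: for each $p_0$ listed in Table~\ref{Table:A1}, evaluate $g(p_0)$ via the explicit formula for $g$ supplied in the proof of Proposition~\ref{P:q_2} and record a slightly rounded-up value as $C$. I do not anticipate any genuine obstacle here, as all of the analytic difficulty has already been absorbed into Proposition~\ref{P:q_2}; the only delicate points are the floor adjustment (handled by the margin above) and the verification that $C\,p^{1/4}\log p<p$ throughout. As a consistency check, the limiting behaviour $g(p)\to 2.71512\ldots$ explains why the tabulated constants $C$ decrease toward this same value as $p_0\to\infty$.
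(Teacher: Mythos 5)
Your proposal is correct and is essentially the paper's own argument in contrapositive form: the paper lets $n_0$ be the least admissible non-residue coprime to $u$, applies Proposition~\ref{P:q_2} with $H=n_0-1$, uses the monotonicity of $g$ to replace $g(p)$ by $g(p_0)$, and absorbs the $+1$ by taking $C=g(p_0)+\frac{1}{p_0^{1/4}\log p_0}$ (rounded up) --- exactly the margin you impose via $(C-g(p_0))\,p^{1/4}\log p>1$. The only cosmetic difference is your framing by contradiction and the (harmless but unnecessary) check that $C\,p^{1/4}\log p<p$.
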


\begin{proof}
Let $n_0$ denote the smallest $n\in\Z^+$ such that $(n,u)=1$ and $\chi(n)\neq 1$.
We apply Proposition~\ref{P:q_2} to find
\[
  n_0-1\leq g(p_0)\,p^{1/4}\log p
  \,.
\]
Computation of the table of constants is routine;
for each value of $p_0$, we compute (being careful to round up)
the quantity
\[
g(p_0)+\frac{1}{p_0^{1/4}\log p_0}
\,.\hspace{3pt}\qedhere
\]
\end{proof}

\begin{corollary}\label{C:A2}
If $p\geq 10^{13}$,
then
$q_2<2.8\,p^{1/4}(\log p)^2$.
\end{corollary}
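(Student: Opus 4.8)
The plan is to exhibit, in every case, a prime non-residue different from $q_1$ (which is therefore $\geq q_2$) lying below the target bound $H:=2.8\,p^{1/4}(\log p)^2$. The basic mechanism is that a small non-residue \emph{coprime to} $q_1$ must have a prime factor that is a non-residue and unequal to $q_1$, hence at least $q_2$; so it suffices to locate a small non-residue coprime to $q_1$. The natural tool is Proposition~\ref{P:A1} applied with $u=q_1$, but this requires $q_1\geq 1.8\log p$. For $p\geq 10^{13}$ one has $1.8\log p\approx 53.9$, so the proof splits into the main case $q_1\geq 1.8\log p$ and the case where $q_1$ is one of the finitely many primes below roughly $54$.

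In the main case $q_1\geq 1.8\log p$, I would invoke Proposition~\ref{P:A1} with $p_0=10^{12}$ (legitimate since $p\geq 10^{13}>p_0$), reading off $C=3.3290$ from Table~\ref{Table:A1}, and taking the prime $u=q_1$. This produces an $n\in\Z^+$ with $(n,q_1)=1$, $\chi(n)\neq 1$, and $n<C\,p^{1/4}\log p$. Since $n$ is coprime to $q_1$ and $\chi(n)\neq 1$, it has a prime factor $q\neq q_1$ with $\chi(q)\neq 1$, so $q_2\leq q\leq n<3.329\,p^{1/4}\log p$. Because $\log p\geq \log 10^{13}\approx 29.9$, we have $3.329<2.8\log p$, and hence $q_2<2.8\,p^{1/4}(\log p)^2$ with plenty of room to spare.

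The case $q_1<1.8\log p$ is where the real work lies, and it is the source of the extra factor of $\log p$ in the statement (the main case already yields the stronger bound $q_2<3.329\,p^{1/4}\log p$). Here $u=q_1$ is too small to feed into Proposition~\ref{P:A1} or Proposition~\ref{P:q_2}, whose proof genuinely uses $u\geq A\log p$ (e.g.\ to keep the factor $1/(1-u^{-1})$ in $f(X,u)$ under control). I would argue by contradiction: assuming $q_2\geq H$, every prime in $(q_1,H]$ is a residue, so every integer $n\leq H$ coprime to $q_1$ satisfies $\chi(n)=1$; stripping the $q_1$-part of a general $n\leq H$ then gives $\chi(n)=\chi(q_1)^{v_{q_1}(n)}$. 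The point is that this structured behavior still forces the relevant weighted character sum over $[1,H]$ to retain a positive proportion of its length — the proportion being bounded below by a constant depending only on the bounded quantity $q_1$ — which contradicts the Burgess/Trevi\~no-type upper bound underlying Proposition~\ref{P:q_2} once $H$ exceeds a fixed multiple of $p^{1/4}\log p$. Carrying this out amounts to re-running the estimates of Proposition~\ref{P:q_2} with the sieving prime $q_1$ small, and the anticipated main obstacle is precisely controlling the contribution of the powers of $q_1$ (there are $O(\log p)$ admissible exponents $v_{q_1}(n)\leq \log H/\log 2$), which is what I expect to cost a single extra factor of $\log p$ and thereby yield the clean uniform bound $q_2<2.8\,p^{1/4}(\log p)^2$ in this remaining case.
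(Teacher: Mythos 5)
Your first case ($q_1\ge 1.8\log p$) is exactly the paper's argument: apply Proposition~\ref{P:A1} with $u=q_1$ and $p_0=10^{12}$, extract from the resulting $n$ a prime non-residue distinct from $q_1$ (hence $\ge q_2$), and note that $3.329\,p^{1/4}\log p$ sits far below the target bound. That part is correct and complete.

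The case $q_1<1.8\log p$ is where your proposal stops being a proof. What you offer there is a program --- ``re-running the estimates of Proposition~\ref{P:q_2} with the sieving prime small,'' with an ``anticipated main obstacle'' and an outcome you ``expect'' to cost one factor of $\log p$ --- but none of it is executed, and the specific constant $2.8$ is never verified. The underlying idea is not unreasonable: if $q_2\ge H$ then $\chi(n)=\chi(q_1)^{v_{q_1}(n)}$ for all $n\le H$, so $\bigl|\sum_{n\le H}\chi(n)\bigr|$ retains an absolute positive proportion of $H$, which a Burgess-type bound forbids once $H$ exceeds a suitable multiple of $p^{1/4}\log p$. But turning this into the stated inequality requires explicit Burgess constants you have not supplied, and Proposition~\ref{P:q_2} cannot simply be quoted, since its hypotheses and internal choices ($h=\lfloor A\log p\rfloor$, the factor $1/(1-u^{-1})$ in $f(X,u)$, the inequality \eqref{E:chooseAB}) genuinely need $u\ge A\log p$. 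The paper sidesteps all of this: after disposing of $q_2=3$, it combines Lemma~7 of \cite{mcgown:jnt} with Trevi\~no's theorem \cite{trevino:mjcnt} bounding by $1.55\,p^{1/4}\log p$ the number of consecutive integers on which $\chi$ is constant, yielding $q_2\le(1.55\,p^{1/4}\log p)(1.8\log p)+1<2.8\,p^{1/4}(\log p)^2$ in two lines. Note also that your diagnosis of where the extra $\log p$ comes from is off: in the paper it is simply the factor $q_1\le 1.8\log p$ multiplying Trevi\~no's run-length bound, not the bookkeeping of powers of $q_1$ inside a character-sum estimate.
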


\begin{proof}
If $q_1>1.8\log p$, then we apply the previous proposition and we are done.
Hence we may assume that $q_1\leq 1.8\log p$.  If $q_2=3$, then
we are clearly done, so we may also assume $q_2\neq 3$.  In this case,
we combine \cite[Lemma~7]{mcgown:jnt}
and~\cite[Theorem~1]{trevino:mjcnt}\footnote
{
In a private correspondence, the author of~\cite{trevino:mjcnt} has indicated
that the result contained therein holds when $p>10^{13}$;
a correction to~\cite{trevino:mjcnt} is forthcoming.
}
to conclude $q_2\leq (1.55 p^{1/4}\log p)(1.8\log p)+1<2.8p^{1/4}(\log p)^2$.
\end{proof}

\begin{corollary}\label{C:A3}
Suppose $p\geq \num{e30}$ and that $\chi$ has odd order.
Then
\[
  q_1 q_2<2.64\,p^{1/2}(\log p)^2
  \,.
\]
\end{corollary}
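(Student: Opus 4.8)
The plan is to split into two cases according to the size of $q_1$, and in each case to combine the bound $q_1<0.9\,p^{1/4}\log p$ with a suitable bound on $q_2$. Note first that since $\chi$ has odd order it is not quadratic, so the exceptional case of Proposition~\ref{P:trevino} never occurs and we are entitled to the sharper constant $0.9$ throughout.

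First I would treat the case $q_1\geq 1.8\log p$. Here $q_1$ is itself a prime satisfying the hypotheses of Proposition~\ref{P:A1} (take $u=q_1$ and $p_0=\num{e30}$), so there exists $n\in\Z^+$ with $(n,q_1)=1$, $\chi(n)\neq 1$, and $n<C\,p^{1/4}\log p$, where $C=2.9208$ is the entry of Table~\ref{Table:A1} for $p_0=\num{e30}$. Since $n<p$ for $p\geq\num{e30}$, every prime factor of $n$ is coprime to the modulus, so $\chi(n)\neq 1$ forces some prime factor $q\mid n$ with $\chi(q)\neq 1$. As $(n,q_1)=1$ we have $q\neq q_1$, and therefore $q$ is a prime non-residue distinct from $q_1$, whence $q_2\leq q\leq n<C\,p^{1/4}\log p$. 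Multiplying by the bound on $q_1$ gives $q_1q_2<0.9\cdot 2.9208\,p^{1/2}(\log p)^2<2.64\,p^{1/2}(\log p)^2$, as desired.

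In the remaining case $q_1<1.8\log p$, I would invoke Corollary~\ref{C:A2} directly (its hypothesis $p\geq\num{e13}$ holds), obtaining $q_2<2.8\,p^{1/4}(\log p)^2$. Then $q_1q_2<5.04\,p^{1/4}(\log p)^3$, and it remains only to verify that this is smaller than $2.64\,p^{1/2}(\log p)^2$, i.e. that $5.04\log p<2.64\,p^{1/4}$. This is immediate for $p\geq\num{e30}$, where the right-hand side dwarfs the left, and the gap only widens as $p$ grows.

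The only delicate point is numerical: the first case is tight, since $0.9\cdot 2.9208\approx 2.629$ leaves little room below $2.64$. Thus the argument genuinely relies both on the odd-order hypothesis (to secure the constant $0.9$ rather than $1.1$ in Proposition~\ref{P:trevino}) and on the specific value of $C$ at $p_0=\num{e30}$; checking that these constants combine to stay under $2.64$ is the crux, while the second case and the factorization step are routine.
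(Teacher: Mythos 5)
Your proof is correct and follows essentially the same route as the paper: the same case split at $q_1=1.8\log p$, with Trevi\~no's bound $q_1<0.9\,p^{1/4}\log p$ (valid since odd order excludes the quadratic case) multiplied against the $q_2$-bound coming from Proposition~\ref{P:q_2} in the large-$q_1$ case, and Corollary~\ref{C:A2} handling the small-$q_1$ case with plenty of room. The only cosmetic difference is that you route the tight case through Proposition~\ref{P:A1} and its tabulated constant $2.9208$ at $p_0=\num{e30}$ (making the prime-factor extraction of a non-residue explicit), whereas the paper cites Proposition~\ref{P:q_2} directly with the rounded constant $2.93$; both give $0.9\cdot 2.93<2.64$.
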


\begin{proof}
If $q_1<1.8\log p$, we use the previous corollary (and its proof)
to obtain $q_2<3 p^{1/4}(\log p)^2$,
and hence $q_1 q_2<5.4 p^{1/4}(\log p)^3\leq 0.01 p^{1/2}(\log p)^2$.
If $q_1\geq 1.8\log p$, then we apply Proposition~\ref{P:trevino} (using the fact that $\chi$ has odd order)
and Proposition~\ref{P:q_2} to find
$q_1 q_2\leq (0.9 p^{1/4}\log p)(2.93 p^{1/4}\log p)$.
The result follows.
\end{proof}


\section{Improved unconditional conductor bounds}\label{S:newbounds}

In this section we will prove Proposition~\ref{P:2}.
First, we observe that applying Trevi\~no's version of the Burgess Inequality (see~\cite{trevino:ijnt})
immediately\footnote{In the technical condition appearing in the proposition,
one must replace $4 f^{1/2}$ by $2 f^{1/2}$;
however, in our application, this condition will be automatically met so
this has no real effect.  Moreover, it is shown in~\cite{trevino:ijnt}
that the technical condition may be dropped completely provided $k\geq 3$.}
gives better constants $D(k)$ for~\cite[Proposition~5.7]{mcgown:ijnt}
for $2\leq k\leq 10$.

\begin{table}
\begin{tabular}{LLL}
\toprule
  & \multicolumn{1}{c}{$q_1$ arbitrary} & \multicolumn{1}{c}{$q_1>100$}   \\ \cmidrule(lr){2-2} \cmidrule(lr){3-3}
k & D_1(k) & D_2(k)\\ \midrule
2 & 36.9582 & 5.6360\\
3 & 25.3026 & 3.8981\\
4 & 21.3893 & 3.3703\\
5 &  19.4132& 3.1104\\
6 & 18.2048 & 2.9523\\
7 & 17.3797 & 2.8439\\
8 & 16.7819 & 2.7650\\
9 & 16.3162 & 2.7030\\
10 & 15.9414 & 2.6525\\
\bottomrule
\end{tabular}
\caption{Values of $D(k)$ when $2\leq k\leq 10$\label{Tab:D} and $f\geq 10^{20}$}
\end{table}

To establish our result, we follow the proof of~\cite[Theorem~5.8]{mcgown:ijnt}.
Details that are identical or very similar will be omitted.
We may assume throughout that $f\geq \num{e40}$.
If $q_1\leq 100$, the techniques
in~\cite{mcgown:ijnt} already
give the desired result
and hence we may assume $q_1>100$.
We treat the cases of $\ell=3$ and $\ell>3$ separately.

First, we treat the cubic case.
In light of Proposition~\ref{proposition:bound_on_f},
it suffices to verify that
$10 q_1^2 q_2\leq f$ and $3 q_1 q_2 r\leq f$.
The former condition easily holds, since applying Proposition~\ref{P:trevino} and Corollary~\ref{C:A3}
immediately gives $10 q_1^2 q_2<24 f^{3/4}(\log f)^3<f$.
We turn to the latter condition.
Proposition~5.7 of~\cite{mcgown:ijnt} (with the improved constants) gives:
\[
  r\leq (D_2(k)(\ell-1))^k f^{\frac{k+1}{4k}}(\log f)^{\frac{1}{2}}
  \,.
\]
Applying Corollary~\ref{C:A3}, this leads to:
\begin{align*}
  3q_1 q_2 r
  &\leq
    3\cdot 2.64 f^{1/2}(\log f)^2\cdot (D_2(k)(\ell-1))^k f^{\frac{k+1}{4k}}(\log f)^{\frac{1}{2}}
  \\
  &\leq
  8\,D_2(k)^k (\ell-1)^k f^{\frac{3k+1}{4k}}(\log f)^\frac{5}{2},
\end{align*}
Choosing $k=4$ we see that the desired condition holds when $f\geq \num{e50}$.

Now we turn to the case when $\ell>3$.
In light of Lemma~\ref{lemma:other_bound},
it suffices to verify that
\[
   \max
   \left\{
   q_1
   \,,
   f^{1/5}\log f
   \right\}
   \,
   q_2
   r
   \leq f
\]
Using Corollaries~\ref{C:A2}, \ref{C:A3} we have
$q_1 q_2\leq 2.7 f^{1/2}(\log f)^2$ as well as
\[
  (f^{1/5}\log f) q_2\leq 3 f^{9/20}(\log f)^2<2.7 f^{1/2}(\log f)^2
  \,.
\]
Consequently, applying Proposition~5.7 of~\cite{mcgown:ijnt} as before, we now have:
\begin{equation}
  \label{E:final.condition}
   \max
   \left\{
   q_1
   \,,
   f^{1/5}\log f
   \right\}
   \,
   q_2
   r
  \leq
  2.7\,D_2(k)^k (\ell-1)^k f^{\frac{3k+1}{4k}}(\log f)^\frac{5}{2}.
\end{equation}
For the primes $\ell=5,7$ we use $k=4$ and for the remaining values of $\ell$
we use $k=3$.
We check that the expression on the righthand side of (\ref{E:final.condition})
is less than $f$ provided $f$ is greater than the value given in Table~\ref{Table:B}.

\section*{Acknowledgements}
The authors would like to thank Thomas Carroll of Ursinus College for graciously allowing the use of his computer cluster for this project.
The research of the first author was funded by a grant given by r\'egion Auvergne.
The second author was partially supported by an internal research grant from California State University, Chico. 
Both authors would like to thank the anonymous referee, whose remarks helped to improve the final version of the paper.


\begin{thebibliography}{c}

\bibitem{bach}
Bach, E.
\emph{Explicit bounds for primality testing and related problems.}
Math. Comp. 55 (1990), no. 191, 355--380.

\bibitem{barnes.sd}
Barnes, E. S.; Swinnerton-Dyer, H. P. F.
\emph{The inhomogeneous minima of binary quadratic forms. I.}
Acta Math. 87, (1952), 259--323.

\bibitem{cerri:2007}
Cerri, J.-P.
\emph{Euclidean minima of totally real number fields: algorithmic determination.}
Math. Comp. 76 (2007), no. 259, 1547--1575.

\bibitem{chatland.davenport}
Chatland, H.; Davenport, H.
\emph{Euclid's algorithm in real quadratic fields.}
Canadian J. Math. 2, (1950), 289--296.

\bibitem{godwin:1967}
Godwin, H. J.
\emph{On Euclid's algorithm in some cubic fields with signature one.}
Quart. J. Math. Oxford Ser. (2) 18 (1967), 333--338.

\bibitem{godwin:1965B}
Godwin, H. J.
\emph{On Euclid's algorithm in some quartic and quintic fields.}
J. London Math. Soc. 40 (1965), 699--704.

\bibitem{godwin.smith:1993}
Godwin, H. J.; Smith, J. R.
\emph{On the Euclidean nature of four cyclic cubic fields.}
Math. Comp. 60 (1993), no. 201, 421--423.

\bibitem{heilbronn:cyclic}
Heilbronn, H.
\emph{On Euclid's algorithm in cyclic fields.}
Canad. J. Math. 3 (1951), 257--268.

\bibitem{heilbronn:cubic}
Heilbronn, H.
\emph{On Euclid's algorithm in cubic self-conjugate fields.}
Proc. Cambridge Philos. Soc. 46, (1950), 377--382.

\bibitem{lemmermeyer}
Lemmermeyer, F.
\emph{The Euclidean Algorithm in Algebraic Number Fields.}
Expo. Math. 13, (1995, updated in 2014), 385--416.

\bibitem{lezowski}
Lezowski, P.
\emph{Computation of the Euclidean minimum of algebraic number fields.}
Math. Comp. 83 (2014), 1397--1426.

\bibitem{mcgown:ijnt}
McGown, K. J.
\emph{Norm-Euclidean cyclic fields of prime degree.}
Int. J. Number Theory 8 (2012), no. 1, 227--254.

\bibitem{mcgown:jtnb}
McGown, K. J.
\emph{Norm-Euclidean Galois fields and the generalized Riemann hypothesis.}
J. Th\'eor. Nombres Bordeaux 24 (2012), no. 2, 425--445.

\bibitem{mcgown:jnt}
McGown, K. J.
\emph{On the second smallest prime non-residue.}
J. Number Theory 133 (2013), no. 4, 1289--1299.

\bibitem{smith:1969}
Smith, J. R.
\emph{On Euclid's algorithm in some cyclic cubic fields.}
J. London Math. Soc. 44 (1969), 577--582.

\bibitem{spira:1971}
Spira, Robert.
\emph{Calculation of the gamma function by Stirling's formula.}
Math. Comp. 25 (1971), 317--322.

\bibitem{trevino:ijnt}
Trevi\~no, E.
\emph{The Burgess inequality and the least $k$-th power non-residue.}
Int. J. Number Theory 11, (2015), no. 5, 1--26.

\bibitem{trevino:mjcnt}
Trevi\~no, E.
 \emph{On the maximum number of consecutive integers on which a character is constant.}
Mosc. J. Comb. Number Theory 2 (2012), no. 1, 56--72.

\bibitem{trevino:jnt}
Trevi\~no, E.
\emph{The least k-th power non-residue.}
J. Number Theory 149 (2015), 201--224.

\end{thebibliography}
\end{document}